\documentclass[a4paper,10pt,fleqn]{article}
\usepackage{amsmath,amssymb,amsthm,graphicx,subfigure,float,caption,epstopdf,tabularx,color, bm,amsfonts,epic}
\usepackage[top=1.25in, bottom=1.0in, left=1.0in, right=1.0in]{geometry}
\usepackage{appendix}
\usepackage{amssymb}
\usepackage{multirow}
\usepackage{mathrsfs}
\usepackage[table]{xcolor}
\usepackage[numbers,sort&compress]{natbib}
\usepackage{comment,enumerate,multicol,xspace}
\usepackage{fancyhdr}
\usepackage{soul}
\usepackage{enumerate}
\usepackage{makecell}
\definecolor{Cobalt}{rgb}{0.25,0.41,0.88}
\usepackage[linkcolor=Cobalt,anchorcolor=Cobalt,citecolor=Cobalt,colorlinks]{hyperref}
\usepackage{bbm}
\usepackage{algorithm}
\usepackage{algorithmic}
\usepackage{booktabs}
\usepackage{lineno}
\usepackage{dsfont}
\usepackage{cases}
\usepackage{pifont}

\allowdisplaybreaks
\newtheorem{thm}{Theorem}[section]
\newtheorem{lem}{Lemma}[section]

\newtheorem{den}{Definition}[section]

\newtheorem*{prf}{Proof}
\numberwithin{equation}{section}

\usepackage{indentfirst}
\graphicspath{{Fig}}

\begin{document}
\title{Finite difference methods for three kinds of reaction-diffusion equations with free boundaries}

\author{Caiyun Huang$^1$,\quad Weizhi Liao$^1$,\quad Weiping Bu$^{1,2}$\footnote{Correspondence author. Email: weipingbu@xtu.edu.cn}
\\{\small $^1$ School of Mathematics and Computational Science, Xiangtan University, }
\\{\small Hunan 411105, China}
\\{\small $^2$ Hunan Key Laboratory for Computation and Simulation in Science and}
\\{\small Engineering, Hunan 411105, China}
}

\date{}
\maketitle
\begin{abstract}
This work considers to numerically solve three kinds of reaction-diffusion equations with free boundaries.
First, the popular front-fixing method is used to transform the considered free boundary problems into fixed boundary problems.
Then, by employing the finite difference method, numerical schemes with $\mathrm{M}$-matrices as their coefficient matrices are developed for these transformed fixed-boundary problems.
Next, for the developed numerical schemes, we establish numerical theory involving positivity preservation, monotonicity preservation, and stability.
It is noteworthy that, unlike some existing works that impose tight restrictions on the time step-size to discuss the numerical theory, the proposed numerical schemes require only a mild restriction.
Finally, numerical examples are provided to test the
developed numerical schemes and to confirm the theoretical results.
\\[2ex]
\textbf{AMS subject classification:} 65M06, 65M12, 65M60.\\[2ex]
\textbf{Keywords:} Reaction-diffusion equation with free boundary; finite difference method; positivity preservation; monotonicity preservation; stability.
\end{abstract}


 \vskip 5mm
\section{Introduction}
\label{Sec1}
It is well known that differential equations are widely used in practical problems. Among them, differential equations with boundary value constraints are an important class, and such equations are typically defined on a preassigned known domain.
In 1891, however, Stefan\cite{stefan1891theorie} provided a classical example in his study of ice formation in polar seas, where the spatial domain is not known in advance but evolves with time, and thus must be determined together with the solution.
Generally speaking, problems in which the domain is not predetermined are  called free boundary problems.
At present, free boundary problems have already been involved in a wide range of fields in science and engineering\cite{crank1984free,chen2015free,carinci2016free,bansch2023interfaces}.

Recently, mathematical models with free boundaries have become a popular tool for simulating population spread.
To better understand the spreading mechanisms of new species and invasive species, some researchers have introduced Stefan conditions into population spread models.
Du and Lin\cite{du2010spreading} are pioneers among these researchers. They proposed a diffusive logistic model with free boundary, and established the spreading-vanishing dichotomy for this model. As is well-known, population models often need to consider general monostable nonlinear terms, while many models also adopt bistable nonlinearities to capture the Allee effect.
In \cite{kaneko2011free}, Kaneko and Yamada studied a class of reaction-diffusion equations with free boundary, and established the spreading-vanishing dichotomy for the solutions under both monostable and bistable nonlinearities.
Du and Lou\cite{du2015spreading} investigated a class of nonlinear diffusion equations with free boundary, and obtained the long-time behavior of the solutions based on monostable, bistable, and combustion types of nonlinearities, respectively. Under the radial symmetry assumption, Du and Guo\cite{du2011spreading} discussed the diffusive logistic equation with free boundary in higher space dimensions, and obtained the corresponding spreading-vanishing dichotomy and spreading speed. Subsequently, Du \emph{et al.}\cite{Du&Matsuzawa2015Spreading} conducted a detailed study of nonlinear diffusion problems with free boundaries and Stefan conditions was carried out for the cases of monostable, bistable, and combustion-type nonlinearities.

In this paper, we consider to numerical solve the following three kinds of reaction-diffusion equations with free boundaries.
The first kind has a right free boundary, which is described as
\begin{align}\label{eqs1_1}
\begin{cases}
u_t = d u_{xx} + f(u), \ 0 < x < h(t), \ t > 0, \\
u_x(t,0) = 0,\ u(t,h(t)) = 0, \ t > 0, \\
h'(t) = -\mu u_x(t,h(t)), \ t > 0, \\
h(0) = h_0, \ u(0,x) = u_0(x), \ 0 \leq x \leq h_0.
\end{cases}
\end{align}
where the initial function $u_0(x) \in C^2([0,h_0])$, $u_0'(0) = 0$, $u_0(h_0) = 0$, and $u_0(x) > 0$ for $0 \le x < h_0$, the free boundary $x = h(t)$ denotes the expanding front, and the homogeneous Neumann condition at $x = 0$ indicates that the left boundary is fixed and the population is confined to the region on its right.
The second kind involves the following double free boundary problem
\begin{align}\label{eqs1_2}
\begin{cases}
{u}_t = d {u}_{xx} + f({u}), \ {g}(t) < x < {h}(t), \ t > 0, \\	
{u}(t, {g}(t)) = 0,\ u(t, {h}(t)) = 0, \  t > 0, \\	
{g}'(t) = -\mu_1 {u}_x(t, {g}(t)), \  t > 0, \\			
{h}'(t) = -\mu_2 {u}_x(t, {h}(t)), \ t > 0, \\			
{g}(0) = -{h}_0,\ {h}(0) = {h}_0, \ {u}(0, x) = {u}_0(x), \  -{h}_0 \leq x \leq {h}_0,		
\end{cases}
\end{align}
where the initial function
${u}_0(x) \in C^2([-{h}_0,{h}_0]),  {u}_0(-{h}_0) = {u}_0({h}_0) = 0,$ and ${u}_0(x) > 0$ for $-{h}_0 \le x < {h}_0$. As the last kind,
we consider the following radially symmetric free boundary problem in two-dimensional polar coordinates
\begin{align}\label{eqs1_3}
\begin{cases}
u_t - d\left( u_{rr} + \frac{1}{r} u_r\right) = f( u), \ 0 < r <  h(t) ,\ t > 0, \\
u_r(t,0) = 0,\ u(t,h(t)) = 0, \ t > 0, \\
h'(t) = -\mu u_r(t,h(t)), \ t > 0, \\
h(0) = h_0,\ u(0,r) = u_0(r), \ 0 \le r \le h_0,
\end{cases}
\end{align}
where $r$ denotes the radial coordinate, the initial function $u_0(r) \in C^2([0,h_0])$, $u_0'(0) = 0$, $u_0(h_0) = 0$, and $u_0(r) > 0$ for $0 \le r < h_0$. Furthermore, for the above three kinds of free boundary problems, we assume that the parameters $d$, $\mu, \mu_1, \mu_2$, $h_0$ are positive constants, and there exists positive constant $L$ such that
\begin{equation}\label{eqs1_4}
-Lu\leq f(u)\leq Lu,\ \left| f'(u)\right|\leq L.
\end{equation}

For mathematical models involving free boundary conditions, their analytical solutions are often difficult to obtain in some complex situations.
Therefore, a large number of researchers have focused on the development of numerical methods to solve such problems\cite{crank1984free,caginalp1986analysis,marshall1986front,womble1989front,hou1995numerical,duffy2013finite}.
Recently, several numerical approaches have been proposed for reaction-diffusion population models with free boundaries arising in ecology.
In Ref. \cite{piqueras2017front}, Piqueras et al. applied the front-fixing method to transform the diffusive logistic population model with free boundary   into a fixed boundary problem, and proposed an explicit finite difference scheme to solve this problem.
Kumar and Rajeev\cite{kumar2020moving} later extended the front-fixing method to a space-fractional diffusive logistic model with free boundary.
Nevertheless, it is worth pointing out that efficient numerical methods for reaction-diffusion equations with free boundaries remain a challenging task.
In recent, Liu et al.\cite{liu2020krylov} employed the front-fixing method to transform a free boundary problem into a fixed boundary problem. And then, they  used four different time discretization methods, namely, the Runge-Kutta method, the Crank-Nicolson method, the IIF method, and the Krylov IIF method, to numerically solve the transformed problem, and compared the stability and computational efficiency of these four methods.
In Ref. \cite{liu2020numerical}, Liu et al. considered numerical methods for two-dimensional reaction-diffusion equations with Stefan-type free boundary conditions. In the radially symmetric case, they employed both the front-tracking method and the front-fixing method, whereas in the general case, they used the level set method.
In Ref. \cite{casaban2023qualitative}, Casab\'an et al. combined the front-fixing method with an explicit finite difference scheme to numerically solve a two-dimensional free-boundary diffusive logistic equation that is radially symmetric but heterogeneous.
Based on the front-fixing method, Yang and Bao \cite{yang2019numerical} constructed an efficient explicit finite difference scheme for a one-dimensional logistic chemotaxis model.
For a random free-boundary diffusive logistic model, Casab\'an et al. \cite{casaban2024random} developed finite difference schemes based on both the front-fixing and front-tracking methods.

It can be seen from the above discussion that numerical methods for reaction-diffusion models with free boundaries are still quite limited, especially the establishment of their numerical theory.
Thus, this fact motivates us to consider the problems \eqref{eqs1_1}--\eqref{eqs1_3}. In this paper, we aim to develop efficient finite difference methods to solve \eqref{eqs1_1}--\eqref{eqs1_3} based on the front-fixing method.
It is worth noting that, in existing related works such as Refs. \cite{kumar2020moving,piqueras2017front,casaban2023qualitative,yang2019numerical}, the positivity of the numerical solution, the monotonicity of the numerical solution for the spreading front, and the stability of the numerical scheme are guaranteed only when the spatial step-size $\Delta z$ is sufficiently small and the temporal step-size satisfies $\Delta t \le C(\Delta z)^2$ for some positive constant $C$. To overcome this extremely stringent
limitations, we propose implicit finite difference methods to solve problems \eqref{eqs1_1}--\eqref{eqs1_3}. The main contributions of this work are as follows:
1. For the three fixed boundary problems obtained by transforming \eqref{eqs1_1}--\eqref{eqs1_3} via the front-fixing method, three implicit finite difference schemes are constructed whose coefficient matrices are $\mathrm{M}$-matrices;
2. In contrast to the existing works \cite{kumar2020moving,piqueras2017front,casaban2023qualitative,yang2019numerical}, by using the properties of $\mathrm{M}$-matrix, we establish the positivity of the numerical solution and the monotonicity of the numerical solution for the moving fronts without the limitation that the spatial step-size $\Delta z$ is sufficiently small and  the time step-size $\Delta t$ depends on the spatial step-size $\Delta z$.
3. By devising new discrete weighted norms, the stability of the developed numerical schemes is established.
4. Some numerical tests are provided to verify the correctness of the obtained numerical theory.

The structure of this paper is as follows. In Section 2, we employ the front-fixing method to convert the free boundary problems \eqref{eqs1_1}--\eqref{eqs1_3} into the fixed boundary problems, and then construct the corresponding implicit finite difference schemes. In Section 3, the positivity of the numerical solution, the monotonicity of the numerical solution for the moving fronts, and the stability of the numerical schemes are derived in detail. In Section 4, numerical tests are carried out to verify the effectiveness of the proposed numerical methods and the correctness of the obtained numerical theory. Finally, a summary is provided in Section 5.

\textbf{Notation}. In this paper, we suppose that $C$ is a  constant and can be different in different situations.

\section{Construction of finite difference schemes}\label{Sec2}
In this section, the Landau transformation is used to convert the problems \eqref{eqs1_1}--\eqref{eqs1_3} into the equivalent problems on fixed domains firstly. Subsequently, implicit finite difference schemes are constructed for these equivalent problems.

To transform the considered free boundary problems \eqref{eqs1_1}--\eqref{eqs1_3} into problems on a fixed domain, we introduce different spatial variables $z,\tilde{z},\hat{z}$ and the corresponding functions $w,\tilde{w},\hat{w}$, respectively, so that the three transformed problems can be more clearly distinguished in the subsequent discussion.
Based on the Landau transformation~\cite{landau1950heat}, we introduce
$
z(t,x) = \frac{x}{h(t)}, \ w(t,z) = u(t,x).
$
Then the problem \eqref{eqs1_1} can be transformed into the following problem on the fixed interval $[0,1]$
\begin{equation}\label{eqs2_4}
\begin{cases}
r(t) w_t - r'(t) \frac{z}{2} w_z - d w_{zz} = r(t) f(w), \ 0<z<1,\ t>0,\\
w_z(t,0)=0,\ w(t,1)=0, \ t>0,\\
r'(t) = -2\mu\, w_z(t,1), \ t>0,\\
r(0)=h_0^2,\ w(0,z)=w_0(z), \ 0\le z\le 1,
\end{cases}
\end{equation}
where $r(t) = h^2(t)$, the initial function $w_0(z) = u_0(zh_0)$ satisfies $w_0(z) \in C^2([0, 1])$, $w_0'(0) = w_0(1) = 0$, and $w_0(z) > 0$ for $0 \leq z < 1$.
Similarly, by introducing
$
\tilde{z}(t,x)=\frac{x - {g}(t)} {{h}(t) - {g}(t)},\ \tilde{w}(t,z) = {u}(t, x),
$
the problem \eqref{eqs1_2} can be transformed into
\begin{equation}\label{eqs2_5}
\begin{cases}
\tilde w_t - \frac{(1-\tilde z) g'(t) + \tilde z  h'(t)}{\tilde h(t) - g(t)} \tilde w_{\tilde z} - \frac{d}{( h(t) -  g(t))^2} \tilde w_{\tilde z\tilde z} = f(\tilde w), \ 0 < \tilde z < 1, \ t > 0, \\
\tilde w(t,0) = 0, \ \tilde w(t,1) = 0, \ t > 0, \\
 g'(t) = -\frac{\mu_1}{ h(t) - g(t)} \tilde w_{\tilde z}(t,0), \ t > 0, \\
h'(t) = -\frac{\mu_2}{ h(t) - g(t)} \tilde w_{\tilde z}(t,1), \ t > 0, \\
g(0)=- h_0, \  h(0) =  h_0, \ \tilde w(0, \tilde z) = \tilde w_0(\tilde z), \ 0 \leq \tilde z \leq 1,
\end{cases}
\end{equation}
where the initial function $\tilde w_0(\tilde z) =  u_0(\tilde z h_0)$ satisfies $\tilde w_0(\tilde z) \in C^2([0, 1])$, $\tilde w_0(0) = \tilde w_0(1) = 0$, and $\tilde w_0(\tilde z) > 0$ for $0 < \tilde z < 1$.
Furthermore, we replace $r,u$ in \eqref{eqs1_3} by
$
\hat z(t,r) = \frac{r}{ h(t)}, \hat w(t,z) = u(t,r),
$
yielding
\begin{equation}\label{eqs2_6}
\begin{cases}
\hat r(t)\hat z \hat w_t - d\hat z \hat w_{\hat z \hat z}
- \left( d + \frac{\hat z^2 \hat r'(t)}{2} \right) \hat w_{\hat z}
= \hat z\hat r(t)f(\hat w), \ 0<\hat z<1,\ t>0, \\
\hat w_{\hat z}(t,0) = 0,\ \hat w(t,1) = 0, \ t>0,\\
\hat r'(t) = -2\mu \hat w_{\hat z}(t,1), \ t>0, \\
\hat r(0) =  h_0^2,\ \hat w(0,\hat z) = \hat w_0(\hat z), \ 0 \le \hat z \le 1,
\end{cases}
\end{equation}
where $\hat r(t) =  h^2(t)$, the initial function $\hat w_0(\hat z) =  u_0(\hat z h_0)$ satisfies $\hat w_0(\hat z) \in C^2([0, 1])$, $\hat w_0'(0) = \hat w_0(1) = 0$, and $\hat w_0(\hat z) > 0$ for $0 \leq \hat z < 1$.

For any fixed $T>0$, we now consider to construct finite difference schemes for problems \eqref{eqs2_4}--\eqref{eqs2_6} on the time interval $[0,T]$.
Let $\Delta \xi$ and $\Delta t$ be the spatial and temporal step-sizes, respectively, such that $M\Delta \xi=1$ and $N\Delta t=T$ for positive integers $M$ and $N$. Denote
$\xi_j=j\Delta \xi,\ j=0,1,\cdots,M,\
t_n=n\Delta t,\ n=0,1,\cdots,N,
\Omega_{s}=\{\xi_j\mid \xi_j=j\Delta \xi,\ 0\le j\le M\},\
\Omega_t=\{t_n\mid t_n=n\Delta t,\ 0\le n\le N\}.
$
Then, for any mesh function $U=\{U_j^n\mid 0\le j\le M,\ 0\le n\le N\}$ on $\Omega_{s}\times\Omega_t$, and any mesh function $Q=\{Q^n\mid 0\le n\le N\}$ on $\Omega_t$, we define the following temporal and spatial difference operators
\begin{equation}\label{eqs2_7}
\delta_t U_j^n=\frac{U_j^n-U_j^{n-1}}{\Delta t},\quad
\delta_t Q^n=\frac{Q^n-Q^{n-1}}{\Delta t},
\end{equation}
\begin{equation}\label{eqs2_7a}
\delta_{\xi} U_{j+\frac12}^n = \frac{U_{j+1}^n - U_j^n}{\Delta {\xi}},\quad
\delta_{\xi}^2 U_j^n =
\begin{cases}
\frac{2}{\Delta {\xi}}\delta_{\xi} U_{\frac12}^n, & j=0,\\
\frac{1}{\Delta {\xi}}\left(\delta_{\xi} U_{j+\frac12}^n - \delta_{\xi} U_{j-\frac12}^n\right), &j\neq 0,
\end{cases}
\end{equation}
where $j=0,1,\cdots,M-1$, $n=1,2,\cdots,N$. It should be pointed out that, when the problems \eqref{eqs2_4}--\eqref{eqs2_6} are considered, we can obviously replace the above variable $\xi$ by $z,\tilde{z}$ and $\hat{z}$, respectively.
Let $w_j^n=w(t_n,z_j)$ and $r^n=r(t_n).$  By employing \eqref{eqs2_7} and \eqref{eqs2_7a}, then the problem \eqref{eqs2_4} gives
\begin{equation}\label{eqs2_9}
\begin{cases}
r^n\delta_t w_0^n - {d}\delta_z^2 w_0^n = r^n f(w_0^{n-1}) + T_0^n, \ 1\leq n\leq N ,\\
r^n\delta_t w_j^n - \frac{z_j\delta_t r^n}{2}\delta_z w_{j+\frac12}^n  - d\,\delta_z^2 w_j^n = r^n f(w_j^{n-1}) + T_j^n,\ 1\le j\le M-1,\ 1\leq n\leq N,\\
\delta_t r^n = - {2\mu}\delta_z w_{M-\frac{1}{2}}^{n-1} + T_M^n,\ 1\leq n\leq N,\\
w_M^n=0,\ 1\leq n\leq N,
\end{cases}
\end{equation}
where $T_j^n$ is the corresponding local truncation error.
Analogously, let $\tilde{w}_j^n=\tilde{w}(t_n,\tilde{z}_j), g^n=g(t_n), h^n=h(t_n)$ and $\hat{w}_j^n=\hat{w}(t_n,\hat{z}_j), \hat{r}^n=\hat{r}(t_n)$, applying \eqref{eqs2_7} and \eqref{eqs2_7a} to the problems \eqref{eqs2_5} and \eqref{eqs2_6} leads respectively to
\begin{equation}\label{eqs2_10}
\begin{cases}
\delta_t \tilde w_j^n
- \frac{(1-{\tilde z}_j)\,\delta_t g^n}{h^n - g^n}\,\delta_{\tilde z} \tilde w_{j-\frac12}^n
- \frac{\tilde z_j\,\delta_t h^n}{ h^n - g^n}\,\delta_{\tilde z} \tilde w_{j+\frac12}^n
- \frac{d}{( h^n - g^n)^2}\,\delta_{\tilde z}^2 \tilde w_j^n\\
= f(\tilde w_j^{n-1})+\tilde T_j^n,\ 1\le j \le M-1,\ 1\le n \le N,\\
\tilde w_0^n = 0,\ \tilde w_M^n = 0, \ 1\le n \le N,\\
\delta_t g^n = -\frac{\mu_1}{ h^{n-1} - g^{n-1}}\,\delta_{\tilde z} \tilde w_{\frac12}^{n-1}+\tilde T_0^n, \ 1\le n \le N,\\
\delta_t h^n = -\frac{\mu_2}{ h^{n-1} - g^{n-1}}\,\delta_{\tilde z} \tilde w_{M-\frac12}^{n-1}+\tilde T_M^n,\ 1\le n \le N,
\end{cases}
\end{equation}
and
\begin{equation}\label{eqs2_11}
\begin{cases}
\hat r^n\hat z_j\delta_t \hat w_j^n - d\hat z_j\delta_{\hat z}^2 \hat w_j^n - \left( d + \frac{\hat z_j^2 \delta_t \hat r}{2} \right)\delta_{\hat z} \hat w_{j+\frac12}^n\\
= \hat r^n\hat z_j f(\hat w_j^{n-1}) + \hat T_j^n,\ 1\le j\le M-1,\ 1\leq n\leq N ,\\
\delta_{\hat z} \hat w_{\frac12}^n=\hat T_0^n,\ \hat w_M^n=0,\ 1\leq n\leq N, \\
\delta_t \hat r^n = - {2\mu}\delta_{\hat z}\hat w_{M-\frac{1}{2}}^{n-1} + \hat T_M^n,\ 1\leq n\leq N,
\end{cases}
\end{equation}
where
$\tilde T_j^n$ and $\hat T_j^n$ denote the corresponding local truncation errors.
What is more, if
$w,\tilde w,\hat w \in C^{4,2}([0,1]\times[0,T])$
and
$r, g, h, \hat r\in C^2[0,T]$,
then the classical local truncation error theory implies that there exists a positive constant $C$ such that
$
|T_j^n|\le C(\Delta t+\Delta z),
|\tilde T_j^n|\le C(\Delta t+\Delta \tilde z),
|\hat T_j^n|\le C(\Delta t+\Delta \hat z).
$
Now, omitting the truncation errors in \eqref{eqs2_9}--\eqref{eqs2_11}, we obtain the implicit finite difference schemes for problems \eqref{eqs2_4}--\eqref{eqs2_6} as follows
\begin{subequations}\label{eqs2_12}
\begin{numcases}{}
R^n \delta_t W_j^n - d\delta_z^2 W_j^n = R^n f(W_j^{n-1}), \ j=0 , \ 1\leq n\leq N,\label{eqs2_12.a}\\
R^n \delta_t W_j^n - \frac{z_j\delta_t R^n}{2}\delta_z W_{j+\frac12}^n  - d\,\delta_z^2 W_j^n = R^n f(W_j^{n-1}), \ 1\le j\le M-1, \ 1\leq n\leq N,\label{eqs2_12.b}\\
\delta_t R^n = -{2\mu}\delta_z W_{M-\frac{1}{2}}^{n-1},  \ 1\leq n\leq N,\label{eqs2_12.c}\\
W_M^n = 0,  \ 1\leq n\leq N,\label{eqs2_12.d}
\end{numcases}
\end{subequations}
\begin{subequations}\label{eqs2_13}
\begin{numcases}{}
\delta_t \tilde W_j^n
- \frac{(1-\tilde{z}_j)\,\delta_t G^n}{ H^n - G^n}\,\delta_{\tilde{z}} \tilde W_{j-\frac12}^n
- \frac{\tilde{z}_j\,\delta_t H^n}{H^n - G^n}\,\delta_{\tilde{z}} \tilde W_{j+\frac12}^n
- \frac{d}{( H^n - G^n)^2}\,\delta_{\tilde{z}}^2 \tilde W_j^n\notag \\
= f(\tilde W_j^{n-1}),\ 1\le j \le M-1,\ 1\le n \le N,\label{eqs2_13.a}\\
\tilde W_0^n = 0,\ \tilde W_M^n = 0, \ 1\le n \le N,\label{eqs2_13.b}\\
\delta_t G^n = -\frac{\mu_1}{H^{n-1} - G^{n-1}}\,\delta_{\tilde{z}} \tilde W_{\frac12}^{n-1}, \ 1\le n \le N,\label{eqs2_13.c}\\
\delta_t H^n = -\frac{\mu_2}{H^{n-1} - G^{n-1}}\,\delta_{\tilde{z}} \tilde W_{M-\frac12}^{n-1},\ 1\le n \le N,\label{eqs2_13.d}
\end{numcases}
\end{subequations}\
and
\begin{subequations}\label{eqs2_14}
\begin{numcases}{}
\hat R^n\hat z_j\delta_t \hat W_j^n - d\hat z_j\delta_{\hat z}^2 \hat W_j^n - \left( d + \frac{\hat z_j^2 \delta_t\hat R^n}{2} \right)\delta_{\hat z} \hat W_{j+\frac12}^n
= \hat R^n\hat z_j f(\hat W_j^{n-1}),\ 1\le j\le M-1,\ 1\leq n\leq N ,\label{eqs2_14.a}\\
\delta_{\hat z} \hat W_{\frac12}^n=0,\ \hat W_{M}^n=0,\ 1\leq n\leq N, \label{eqs2_14.b}\\
\delta_t \hat R^n = - {2\mu}\delta_{\hat z}\hat W_{M-\frac{1}{2}}^{n-1}, \ 1\leq n\leq N.\label{eqs2_14.c}
\end{numcases}
\end{subequations}
\section{Positivity preservation, monotonicity preservation and stability analysis}
In order to establish the numerical theory of the proposed numerical schemes, we first introduce the $\mathrm{M}$-matrix and its some properties.
\begin{den}\label{def1}(\cite{berman1994nonnegative})
Let $A=(a_{ij})_{n\times n}$ satisfy $a_{ij}\le 0$ for all  $1\le i\ne j\le n$. If $A$ can be written in the form
$A=sI-B$, then $A$ is called a nonsingular $\mathrm{M}$-matrix and denoted by $A\in M_n$, where $B=(b_{ij})_{n\times n}$ with each element $b_{ij}\geq 0$ and $s>\rho(B)$ denoting the spectral radius of $B$ as $\rho(B)$.
\end{den}
\begin{den}\label{def2}(\cite{horn2012matrix})
Let $A=(a_{ij})_{n\times n}\in M_n$. The matrix
$A$ is said to have property SC if, for each pair of different integers $p,q\in\{1,2,\cdots,n\},$ there exists a sequence of different integers
$k_1=p,\ k_2,\ \cdots,\ k_m=q$
such that
$a_{k_1k_2},\ a_{k_2k_3},\ \cdots,\ a_{k_{m-1}k_m}$
are all nonzero.
\end{den}
\begin{lem}\label{lem1}(\cite{berman1994nonnegative,horn2012matrix})
Let $A\in M_n$. The matrix $A$ has property SC if and only if $A$ is irreducible.
\end{lem}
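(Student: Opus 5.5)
The plan is to go through the directed graph of $A$. Let $\Gamma(A)$ be the digraph on the vertices $\{1,\dots,n\}$, with an arc $i\to j$ whenever $i\neq j$ and $a_{ij}\neq 0$. Property SC says exactly that $\Gamma(A)$ is strongly connected: every ordered pair $p\neq q$ is joined by a directed path $k_1=p,\dots,k_m=q$ whose consecutive entries $a_{k_sk_{s+1}}$ are nonzero. We can take the integers in the path to be distinct, because any walk can be shortened to a path. So the proof reduces to the classical fact that a square matrix is irreducible if and only if its digraph is strongly connected. Since only off-diagonal entries enter $\Gamma(A)$, the M-matrix hypothesis is not really used, apart from fixing the setting. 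The sign pattern $a_{ij}\le 0$ only tells us that the nonzero off-diagonal entries are negative.

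For ``irreducible $\Rightarrow$ SC'' I argue by contraposition. Suppose $\Gamma(A)$ is not strongly connected. Choose $p$ and $q$ such that no directed path runs from $p$ to $q$. Let $S$ be the set of vertices reachable from $p$, together with $p$ itself. Then $S\neq\emptyset$, $q\notin S$, and no arc leaves $S$, so $a_{ij}=0$ whenever $i\in S$ and $j\notin S$. Take a permutation matrix $P$ that lists the indices of $S^c$ first and those of $S$ last. Then $P^TAP=\begin{pmatrix}A_{11}&A_{12}\\ 0&A_{22}\end{pmatrix}$, where the zero block is the $(S,S^c)$ block. This is the definition of a reducible matrix.

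For ``SC $\Rightarrow$ irreducible'' I again argue by contraposition. Suppose $P^TAP$ has the block form above with both diagonal blocks nonempty. Let $I_1$ and $I_2$ be the index sets corresponding to the two diagonal blocks. The zero block means there is no arc from $I_2$ to $I_1$. Pick $p\in I_2$ and $q\in I_1$. Any path from $p$ to $q$ would have to cross from $I_2$ into $I_1$ at some first step, and such a step needs a nonzero entry $a_{k_sk_{s+1}}$ with $k_s\in I_2$ and $k_{s+1}\in I_1$. No such entry exists, so property SC fails.

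The only point needing care is the bookkeeping in the first direction: the permutation has to place the zero block below the diagonal, consistent with the chosen definition of reducibility. One also has to check the trivial case $n=1$, which is handled by the usual convention that $1\times1$ matrices are irreducible; SC then holds vacuously. Beyond this there is no real obstacle, and the lemma can equally be cited from \cite{berman1994nonnegative,horn2012matrix}, as the paper does.
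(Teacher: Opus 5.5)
Your proof is correct. The paper gives no argument of its own for this lemma, only the citation to Berman--Plemmons and Horn--Johnson. Your reachable-set/strong-connectivity argument, with both directions by contraposition and the block orientation checked, is the standard proof found in those references.

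One small correction concerns $n=1$. In those references a $1\times 1$ matrix is reducible precisely when it is zero, so for general matrices the equivalence fails at $A=(0)$, where SC holds vacuously. The case is rescued by the hypothesis $A\in M_1$, which forces $A=(s-b)$ with $s>b\ge 0$ and hence $A\neq 0$. So the M-matrix assumption is not entirely idle, contrary to your remark.
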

\begin{lem}\label{lem2}(\cite{berman1994nonnegative})
Let $A=(a_{ij})_{n\times n}$ with $a_{ij}\le 0$ for all $1\le i\ne j\le n$, and assume that $A$ is irreducible. Then $A^{-1}=(b_{ij})_{n\times n}$ with $b_{ij}>0$ for all $i,\ j=1,2,\cdots,n$ if and only if $A\in M_n$.
\end{lem}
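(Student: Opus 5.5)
We have to prove Lemma \ref{lem2}: for an irreducible $Z$-matrix $A$ (nonpositive off-diagonal entries), $A^{-1}$ exists and is entrywise positive if and only if $A\in M_n$. The plan is to write $A=sI-B$ with $s\ge\max_i a_{ii}$ and $B=sI-A\ge 0$. Then $B$ is nonnegative and has the same off-diagonal zero pattern as $A$, so $B$ is irreducible. Both directions will then come from Perron--Frobenius theory for the irreducible nonnegative matrix $B$. Such a splitting always exists, and one checks that membership in $M_n$ does not depend on the admissible choice of $s$: $\rho(sI-A)-s$ is independent of $s$ once $sI-A\ge0$, because the Perron root shifts with $s$.

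\textbf{($\Leftarrow$)} Suppose $A\in M_n$, so $s>\rho(B)$. Then $\rho(B/s)<1$ and the Neumann series converges:
\[
A^{-1}=\frac1s\sum_{k\ge0}\Big(\frac{B}{s}\Big)^k .
\]
Every term is nonnegative. Irreducibility of $B$ gives, for each pair $(i,j)$, a path $i=k_1,\dots,k_m=j$ in the digraph of $B$; this is property SC, via Lemma \ref{lem1} applied to the graph of $B$. Hence $(B^{m-1})_{ij}>0$ for that $m\le n$, and the diagonal entries are covered by the $k=0$ term. Therefore $\sum_{k=0}^{n-1}B^k>0$ entrywise, and so $A^{-1}>0$.

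\textbf{($\Rightarrow$)} Suppose $A^{-1}$ exists with all entries positive. By Perron--Frobenius for irreducible nonnegative $B$, there is a vector $x>0$ with $Bx=\rho(B)x$. Then
\[
Ax=(s-\rho(B))x, \qquad\text{so}\qquad x=(s-\rho(B))A^{-1}x .
\]
Since $A$ is nonsingular, $s\neq\rho(B)$. Since $A^{-1}>0$ and $x>0$, the vector $A^{-1}x$ is strictly positive. Comparing signs gives $s-\rho(B)>0$, i.e.\ $s>\rho(B)$, and thus $A\in M_n$.

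The main obstacle is the forward direction's reliance on the existence of a strictly positive Perron eigenvector for an irreducible nonnegative matrix. I would quote this from \cite{berman1994nonnegative} rather than reprove it. The only other care needed is the remark above that the definition of $M_n$ is independent of the chosen $s$, so that the specific splitting $s\ge\max_i a_{ii}$ is legitimate.
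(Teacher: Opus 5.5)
The paper gives no proof of this lemma; it is quoted from \cite{berman1994nonnegative}, so there is no in-paper argument to compare against. Your proposal is correct on its own and is the standard proof.

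For ($\Leftarrow$), the Neumann series $A^{-1}=s^{-1}\sum_{k\ge0}(B/s)^k$ has nonnegative terms. A path of length at most $n-1$ in the strongly connected digraph of $B$ makes the off-diagonal entries of $\sum_{k=0}^{n-1}B^k$ positive, and the $k=0$ term covers the diagonal, so $A^{-1}>0$. For ($\Rightarrow$), the Perron vector $x>0$ gives $x=(s-\rho(B))A^{-1}x$ with $s\neq\rho(B)$. Positivity of $A^{-1}x$ then forces $s>\rho(B)$. Both halves are sound.

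A few minor remarks:
\begin{enumerate}
\item Lemma~\ref{lem1}, as stated in the paper, assumes $A\in M_n$, so invoking it ``for the graph of $B$'' is not literally licensed. What you actually use is the general fact that any square matrix is irreducible if and only if its digraph is strongly connected. That fact depends only on the off-diagonal zero pattern, which $A$ and $B$ share.
\item Your observation that membership in $M_n$ does not depend on the admissible $s$ is correct, since $\rho(B+cI)=\rho(B)+c$ for $B\ge0$ and $c\ge0$. It is not needed, though. In ($\Leftarrow$) you can use the splitting $A=s_0I-B_0$ that the definition supplies, and $B_0$ is irreducible for the same pattern reason. In ($\Rightarrow$) you only have to exhibit one splitting, which you do.
\item The ``main obstacle'' you flag in ($\Rightarrow$) can be weakened. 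Neither irreducibility nor a strictly positive eigenvector is required there. If $A^{-1}\ge0$ and $x\ge0$, $x\neq0$, satisfies $Bx=\rho(B)x$, pick $i$ with $x_i>0$. Then $x_i=(s-\rho(B))(A^{-1}x)_i$ with $(A^{-1}x)_i\ge0$, which already gives $s>\rho(B)$.
\item The paper only ever uses the ($\Leftarrow$) direction, in the proof of Lemma~\ref{lem3}. In your argument that is the elementary half, needing no Perron--Frobenius theory at all.
\end{enumerate}
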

\begin{lem}\label{lem3}
Let $A\mathbf{x}=\mathbf{b}$. If $A=(a_{ij})_{n\times n}$ is a tridiagonal matrix satisfying $a_{ij}<0$ for $|i-j|=1$ and $a_{ii}>\sum_{j\ne i}|a_{ij}|$ for $i=1,2,\dots,n$, and if $\mathbf{b}=(b_1,b_2,\ldots,b_n)^\top$ satisfies $b_i>0$ for $i=1,2,\dots,n$, then there exists a unique solution $\mathbf{x}=(x_1,x_2,\ldots,x_n)^\top$ satisfies $x_i>0$ for $i=1,2,\ldots,n$.
\end{lem}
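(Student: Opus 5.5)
The plan is to show that $A$ is an irreducible nonsingular $\mathrm{M}$-matrix and then apply Lemma \ref{lem2} to get $A^{-1}>0$ entrywise. Positivity of $\mathbf{x}=A^{-1}\mathbf{b}$ then follows at once. By hypothesis the off-diagonal entries of $A$ are nonpositive, so only two facts need checking: irreducibility, and membership in $M_n$.

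\textbf{Irreducibility.} Every sub- and super-diagonal entry $a_{i,i\pm1}$ is strictly negative, hence nonzero. For distinct $p,q$ we can therefore take the chain of consecutive indices $p,p\pm1,\dots,q$ (increasing if $p<q$, decreasing otherwise). Each link $a_{k_lk_{l+1}}$ in this chain is nonzero, so $A$ has property SC in the sense of Definition \ref{def2}. One small point: Lemma \ref{lem1} is stated for $A\in M_n$, so I would establish the $\mathrm{M}$-matrix property first and only then invoke Lemma \ref{lem1}. Alternatively, irreducibility can be read off directly, since the directed graph of a tridiagonal matrix with nonzero off-diagonals is a strongly connected path. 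The case $n=1$ is trivial: $x_1=b_1/a_{11}>0$.

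\textbf{$\mathrm{M}$-matrix property.} Set $s=\max_i a_{ii}>0$ (each $a_{ii}>0$ by strict diagonal dominance) and $B=sI-A$. Then $B\ge0$ entrywise, since its diagonal entries $s-a_{ii}\ge0$ and its off-diagonal entries $-a_{ij}\ge0$. To show $\rho(B)<s$, bound the spectral radius by the maximum row-sum norm:
\[
\rho(B)\le\|B\|_\infty=\max_i\Big(s-a_{ii}+\sum_{j\ne i}|a_{ij}|\Big)<s,
\]
where the strict inequality uses $a_{ii}>\sum_{j\ne i}|a_{ij}|$ for every $i$. By Definition \ref{def1}, $A\in M_n$. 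Nonsingularity, and hence existence and uniqueness of $\mathbf{x}$, follows either from this or directly from strict diagonal dominance.

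\textbf{Conclusion.} By Lemma \ref{lem2}, $A^{-1}=(\beta_{ij})$ with $\beta_{ij}>0$ for all $i,j$. Therefore $x_i=\sum_j\beta_{ij}b_j>0$, since every $b_j>0$. No step is a real obstacle; the only care needed is the order of the appeals (M-matrix before Lemma \ref{lem1}) and the degenerate case $n=1$.
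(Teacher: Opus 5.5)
Your proposal is correct and follows essentially the same route as the paper. Like the paper, you write $A=sI-B$ with $s=\max_i a_{ii}$, show $\rho(B)<s$ so that $A\in M_n$, obtain irreducibility from property SC via Lemma~\ref{lem1}, and conclude $A^{-1}>0$ entrywise from Lemma~\ref{lem2}, so that $\mathbf{x}=A^{-1}\mathbf{b}>0$. The only difference is that you bound $\rho(B)$ by $\|B\|_\infty$ rather than by Gershgorin discs; this is the same row-sum estimate, and slightly cleaner, since it bounds $|\lambda|$ directly instead of only the rightmost real point of each disc.
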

\begin{prf}
According to the assumption, it is clear that $A$ can be written as
$A = sI - B,$
where
$s = \max_{1 \leq i \leq n} a_{ii}$,
and $B=(b_{ij})_{n\times1}$ satisfies $b_{ij}\ge 0$ for all $i,\ j=1,2,\cdots,n$.  We now show that \(s > \rho(B)\). Using the Gershgorin disk theorem, each eigenvalue of $B$ lies in the union of the Gershgorin disks
$
G(B) = \bigcup_{i=1}^n \{ z \in \mathbb{C}, |z - b_{ii}| \leq R_i(B) \},
$
where $R_i(B) = \sum_{j \neq i} |b_{ij}|, i = 1, 2,\cdots, n$.
Since $B=sI-A$, the centers of these disks are given by
$
b_{ii}=s-a_{ii}\ge 0,\ i=1,2,\dots,n,
$
and their radii are
\begin{equation*}
R_i(B)=\sum_{j\ne i}|b_{ij}|=
\begin{cases}
-a_{i,i+1}, \ i=1,\\
-a_{i,i-1}-a_{i,i+1}, \ 2\le i\le n-1,\\
-a_{i,i-1}, \ i=n.
\end{cases}
\end{equation*}
Therefore, the rightmost intersection point of the disk centered at $b_{ii}$ with radius $R_i(B)$ and the real axis is
\begin{equation*}
s-a_{ii}+R_i(B)=
\begin{cases}
s-(a_{ii}+a_{i,i+1}), \ i=1,\\
s-(a_{ii}+a_{i,i-1}+a_{i,i+1}), \ 2\le i\le n-1,\\
s-(a_{ii}+a_{i,i-1}), \ i=n.
\end{cases}
\end{equation*}
From $a_{ii}>\sum_{j\ne i}|a_{ij}|$, it follows that $s-a_{ii}+R_i(B)<s$,
it clearly implies $\rho(B)<s$. Therefore, from Definition \ref{def1}, $A$ is a nonsingular $\mathrm{M}$-matrix.
Moreover, since $A$ is tridiagonal and satisfies $a_{ij}<0$ for $|i-j|=1$, it follows from Definition \ref{def2} that $A$ has property SC. Thus, by Lemma \ref{lem1}, $A$ is irreducible. The above derivation shows that $A$ is an irreducible nonsingular $\mathrm{M}$-matrix. Now, by Lemma \ref{lem2}, we can conclude that there exists a unique solution $\mathbf{x}=A^{-1}\mathbf{b}$ of $A\mathbf{x}=\mathbf{b}$ satisfies $x_i>0$ for $i=1,2,\cdots,n$.
\end{prf}
\subsection{Positivity and monotonicity preservation}
Next, we discuss the positivity of the numerical solution $W_j^n$ and the monotonicity of $R^n$ for numerical scheme \eqref{eqs2_12}, the positivity of the numerical solution $\tilde W_j^n$ and the monotonicity of $G^n$ and $H^n$ for numerical scheme \eqref{eqs2_13}, and the positivity of the numerical solution $\hat W_j^n$ and the monotonicity of $\hat R^n$ for numerical scheme \eqref{eqs2_14}, respectively.

\begin{thm}
    For numerical scheme \eqref{eqs2_12}, if $\Delta t < 1/L$, then the numerical solution satisfies
\begin{equation}\label{eqs3_4}
    W_j^n > 0, \quad 1 \le n \le N, \quad 0 \le j \le M-1,
\end{equation}
and
\begin{equation}\label{eqs3_5}
    R^n > R^{n-1}, \quad 1 \le n \le N.
\end{equation}
\end{thm}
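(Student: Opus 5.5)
Plan: proceed by induction on $n$, proving simultaneously that $W_j^{n-1}>0$ for $0\le j\le M-1$ (with $W_M^{n-1}=0$) implies $R^n>R^{n-1}$ and then $W_j^n>0$ for $0\le j\le M-1$. The base case uses the initial data $W_j^0=w_0(z_j)>0$ for $0\le j\le M-1$ and $W_M^0=w_0(1)=0$, together with $R^0=h_0^2>0$.

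\textbf{Monotonicity step.} Assume $W_j^{n-1}>0$ for $j\le M-1$ and $W_M^{n-1}=0$. Then \eqref{eqs2_12.c} gives
\[
R^n-R^{n-1}=-2\mu\Delta t\,\frac{W_M^{n-1}-W_{M-1}^{n-1}}{\Delta z}=\frac{2\mu\Delta t}{\Delta z}W_{M-1}^{n-1}>0 .
\]
This yields \eqref{eqs3_5} at level $n$. In particular $R^n>R^{n-1}>\cdots>R^0>0$ and $\delta_t R^n>0$. Note that \eqref{eqs2_12.c} is explicit, so $R^n$ is known before the linear system for $W^n$ is solved.

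\textbf{Positivity step.} Multiply \eqref{eqs2_12.a}--\eqref{eqs2_12.b} by $\Delta t$ and write the system for $(W_0^n,\dots,W_{M-1}^n)^\top$, using $W_M^n=0$. Set $a=d\Delta t/\Delta z^2$ and $c_j=z_j(R^n-R^{n-1})/(2\Delta z)\ge 0$.
\begin{itemize}
\item Row $j=0$: diagonal $R^n+2a$, off-diagonal $-2a$.
\item Rows $1\le j\le M-1$: diagonal $R^n+2a+c_j$, subdiagonal $-a$, superdiagonal $-a-c_j$ (absent for $j=M-1$, since $W_M^n=0$).
\end{itemize}
The upwind choice $\delta_z W_{j+1/2}$ is exactly what keeps the convection term on the right sign: it adds $+c_j$ to the diagonal and $-c_j$ to the superdiagonal. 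The off-diagonals are strictly negative, and each row's diagonal exceeds the sum of its off-diagonal magnitudes by at least $R^n>0$ (by more in the last row). The right-hand side is $b_j=R^n\big(W_j^{n-1}+\Delta t f(W_j^{n-1})\big)$. By \eqref{eqs1_4}, $f(W_j^{n-1})\ge -LW_j^{n-1}$, so $b_j\ge R^n(1-L\Delta t)W_j^{n-1}>0$ because $\Delta t<1/L$ and $W_j^{n-1}>0$. Lemma~\ref{lem3} then gives a unique solution with $W_j^n>0$ for $0\le j\le M-1$, which closes the induction.

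\textbf{Main obstacle.} The step to watch is the strict diagonal dominance with the negativity of the off-diagonals in every row, including the Neumann row $j=0$ and the boundary row $j=M-1$. This relies on the ordering: first establish $\delta_t R^n>0$ from the explicit $R$-update, then use it so that $c_j\ge0$ in the implicit $W$-system. If $R$ were not already increasing, the convection coefficient could make the superdiagonal positive and Lemma~\ref{lem3} would fail. One edge case: when $M=1$ the system reduces to the single row $j=0$, and it is handled the same way.
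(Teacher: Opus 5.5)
Your proof is correct and follows essentially the paper's argument: induction on $n$, where the explicit update \eqref{eqs2_12.c} with $W_M^{n-1}=0$ first gives $R^n>R^{n-1}$, then $\delta_t R^n>0$ gives strict diagonal dominance with negative off-diagonals, and $\Delta t<1/L$ together with \eqref{eqs1_4} makes the right-hand side positive, so Lemma~\ref{lem3} applies. Your explicit row entries, including the $-2a$ coupling in the Neumann row and the use of $W_M^0=w_0(1)=0$ in the base case, are stated more carefully than in the paper's \eqref{eqs3_9}.
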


\begin{prf}
From \eqref{eqs2_12.a} and \eqref{eqs2_12.b}, we have
\begin{align}\label{eqs3_a}
\left( \dfrac{R^n}{\Delta t}+\dfrac{2d}{(\Delta z)^2}\right) W_0^n-\dfrac{2d}{(\Delta z)^2}W_1^n
= \dfrac{R^n}{\Delta t}W_0^{n-1}+R^n f(W_0^{n-1}), \  1 \le n \le N,
\end{align}
and
\begin{align}\label{eqs3_b}
&\left( \dfrac{R^n}{\Delta t}+\dfrac{z_j}{2}\dfrac{R^n-R^{n-1}}{\Delta t \Delta z}+\dfrac{2d}{(\Delta z)^2}\right) W_j^n
-\left( \dfrac{z_j}{2}\dfrac{R^n-R^{n-1}}{\Delta t\Delta z}+\frac{d}{(\Delta z)^2}\right) W_{j+1}^n
-\dfrac{d}{(\Delta z)^2}W_{j-1}^n \notag \\
= &\dfrac{R^n}{\Delta t}W_j^{n-1}+R^n f(W_j^{n-1}), \ 1\le j\le M-1,\ 1 \le n \le N.
\end{align}
Thus, combining \eqref{eqs3_a} with \eqref{eqs3_b} leads to
\begin{equation}\label{eqs3_8}
\bar{A}_n\mathbf{\bar{W}}_n = \mathbf{b}_n,\quad 1\le n\le N,
\end{equation}
where
$
\mathbf{\bar{W}}_n=\left(W_0^n,W_1^n,\ldots,W_{M-1}^n\right)^\top,
$
\[
\mathbf{b}_n=\left( \frac{R^n}{\Delta t}W_0^{n-1}+R^nf(W_0^{n-1}),\
\frac{R^n}{\Delta t}W_1^{n-1}+R^n f(W_1^{n-1}),\
\ldots,\
\frac{R^n}{\Delta t}W_{M-1}^{n-1}+R^n f(W_{M-1}^{n-1})\right)^\top,
\]
and the coefficient matrix $\bar{A}_n=(a_{ij}^n)_{M\times M}$ is tridiagonal with
\begin{equation}\label{eqs3_9}
\begin{cases}
a_{ij}^n=\frac{R^n}{\Delta t}+\frac{z_j}{2}\frac{R^n-R^{n-1}}{\Delta t\Delta z}+\frac{2d}{(\Delta z)^2}, \ i=j,\\
a_{ij}^n=-\frac{z_j}{2}\frac{R^n-R^{n-1}}{\Delta t\Delta z}-\frac{d}{(\Delta z)^2}, \ i=j-1,\\
a_{ij}^n=-\frac{d}{(\Delta z)^2}, \ i=j+1,\\
a_{ij}^n=0,\ |i-j|\ge 2.
\end{cases}
\end{equation}
For $n=1$, it follows from \eqref{eqs2_12.c} and \eqref{eqs2_12.d} that
$
R^1=R^0+\frac{2\mu \Delta t}{\Delta z}W_{M-1}^{0}.
$
Given that $W_j^0=u_0(z_jh_0)>0$, we get
\begin{equation}\label{eqs3_10}
R^1>R^0.
\end{equation}
By applying \eqref{eqs3_9} and  \eqref{eqs3_10}, we deduce that the matrix $\bar{A}_1$ in \eqref{eqs3_8} satisfies
\begin{equation}\label{eqs3_11}
a_{ii}^1>\sum_{j\neq i}|a_{ij}^1|,\ a_{ij}^1<0,\ |i-j|=1.
\end{equation}
Besides, the element of the vector $\mathbf{b}_1$ in \eqref{eqs3_8} is given by
$
b_j^1=\frac{R^1}{\Delta t}W_j^0+R^1 f(W_j^0).
$
Under the condition $\Delta t<1/L$, it follows from \eqref{eqs1_4} that
\begin{align}\label{eqs3_13}
b_j^1 &\ge \frac{R^1}{\Delta t}W_j^0-LR^1W_j^0 >0.
\end{align}
Thus, the combination of \eqref{eqs3_11} and \eqref{eqs3_13} and Lemma \ref{lem3} yields $W_j^1>0$ for $0\le j\le M-1$. It means that, for $n=1$, the inequalities  \eqref{eqs3_4} and \eqref{eqs3_5} hold.

Assume that, for some $l\ge 2$, the inequalities \eqref{eqs3_4} and \eqref{eqs3_5} hold for $1\le n\le l-1$ under the condition $\Delta t<1/L$. Now we consider the case $n=l$. According to \eqref{eqs2_12.c} and \eqref{eqs2_12.d}, one has
\begin{equation*}
R^l=R^{l-1}+\frac{2\mu \Delta t}{\Delta z}W_{M-1}^{l-1}.
\end{equation*}
Since \eqref{eqs3_5} holds for $n=l-1$, i.e., $W_j^{l-1}>0$ for $0\le j\le M-1$, we deduce that
\begin{equation}\label{eqs3_14}
R^l>R^{l-1}.
\end{equation}
From \eqref{eqs3_9} and \eqref{eqs3_14}, it is easy to verify that the matrix $\bar{A}_l$ in \eqref{eqs3_8} satisfies
\begin{equation}\label{eqs3_15}
a_{ii}^l > \sum_{j\neq i} |a_{ij}^l|,\ a_{ij}^l<0,\ |i-j|=1.
\end{equation}
For the vector $\mathbf{b}_l$ in \eqref{eqs3_8}, its element is given by
$
b_j^l=\frac{R^l}{\Delta t}W_j^{l-1}+R^l f(W_j^{l-1}).
$
If $\Delta t<1/L$, then \eqref{eqs1_4} implies
\begin{align}\label{eqs3_17}
b_j^l &\ge \frac{R^l}{\Delta t}W_j^{l-1}-LR^lW_j^{l-1}>0.
\end{align}
By combining \eqref{eqs3_15} and \eqref{eqs3_17} and Lemma \ref{lem3},
it is clear that $W_j^l>0, 0\le j\le M-1$. Therefore, via the mathematical induction, for any $1\le n\le N$, if $\Delta t<1/L$, then \eqref{eqs3_4} and \eqref{eqs3_5} hold.

\end{prf}
Next, we employ the analogous argument to analyze numerical schemes \eqref{eqs2_13} and \eqref{eqs2_14}.
\begin{thm} \label{thm3_1}
For the numerical scheme \eqref{eqs2_13}, if $\Delta t<1/L$,
then the numerical solution satisfies
\begin{equation}\label{eqs3_18}
    \tilde{W}_j^n>0,\ 1\le j\le M-1,\ 1\le n \le N,
\end{equation}
and
\begin{equation}\label{eqs3_19}
     G^n< G^{n-1},\  H^{n}> H^{n-1},\ 1\le n \le N.
\end{equation}
\end{thm}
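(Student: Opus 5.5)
We argue by induction on $n$, following the structure of the proof of the preceding theorem for scheme \eqref{eqs2_12}, with Lemma \ref{lem3} as the main tool. Along the way we also carry the positivity of $H^n-G^n$; it is needed so that every coefficient in \eqref{eqs2_13} has a definite sign. Initially $H^0-G^0=2h_0>0$.

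\textbf{Free boundary update.} Since $\tilde W_0^{n-1}=\tilde W_M^{n-1}=0$, equations \eqref{eqs2_13.c}--\eqref{eqs2_13.d} give
\[
G^n=G^{n-1}-\frac{\mu_1\Delta t}{(H^{n-1}-G^{n-1})\Delta\tilde z}\,\tilde W_1^{n-1},\qquad
H^n=H^{n-1}+\frac{\mu_2\Delta t}{(H^{n-1}-G^{n-1})\Delta\tilde z}\,\tilde W_{M-1}^{n-1}.
\]
Suppose $\tilde W_j^{n-1}>0$ for $1\le j\le M-1$ and $H^{n-1}>G^{n-1}$. For $n=1$ this holds because $\tilde w_0>0$ on the interior nodes. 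Then $G^n<G^{n-1}$ and $H^n>H^{n-1}$, which is \eqref{eqs3_19} at level $n$. It also follows that $H^n-G^n>H^{n-1}-G^{n-1}>0$, so the induction hypothesis on the interval length propagates.

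\textbf{Linear system at level $n$.} Write $\alpha_j=\frac{(1-\tilde z_j)(G^{n-1}-G^n)}{\Delta t(H^n-G^n)\Delta\tilde z}>0$, $\beta_j=\frac{\tilde z_j(H^n-H^{n-1})}{\Delta t(H^n-G^n)\Delta\tilde z}>0$ and $\gamma=\frac{d}{(H^n-G^n)^2(\Delta\tilde z)^2}>0$. Expanding \eqref{eqs2_13.a} for $1\le j\le M-1$ gives
\[
\Big(\tfrac1{\Delta t}+\alpha_j+\beta_j+2\gamma\Big)\tilde W_j^n-(\alpha_j+\gamma)\tilde W_{j-1}^n-(\beta_j+\gamma)\tilde W_{j+1}^n=\tfrac1{\Delta t}\tilde W_j^{n-1}+f(\tilde W_j^{n-1}).
\]
The two upwind choices in \eqref{eqs2_13.a} are what make this work. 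The term $-\frac{(1-\tilde z_j)\delta_tG^n}{H^n-G^n}\delta_{\tilde z}\tilde W_{j-\frac12}^n$ uses a backward difference, and since $\delta_tG^n<0$ it contributes $+\alpha_j$ to the diagonal and $-\alpha_j$ to the $(j,j-1)$ entry. Symmetrically, the term in $\delta_tH^n>0$ with a forward difference contributes $+\beta_j$ to the diagonal and $-\beta_j$ to the $(j,j+1)$ entry.

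Using $\tilde W_0^n=\tilde W_M^n=0$, we obtain an $(M-1)\times(M-1)$ tridiagonal system. Its off-diagonal entries are strictly negative, since each is at most $-\gamma<0$. Each diagonal entry exceeds the sum of the moduli of its row's off-diagonal entries by at least $1/\Delta t>0$; the boundary rows simply drop one of the neighbours. On the right-hand side, \eqref{eqs1_4} and $\Delta t<1/L$ give
\[
b_j\ge\Big(\tfrac1{\Delta t}-L\Big)\tilde W_j^{n-1}>0.
\]
Lemma \ref{lem3} then yields $\tilde W_j^n>0$ for $1\le j\le M-1$, which closes the induction and proves \eqref{eqs3_18}.

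\textbf{Main obstacle.} The only delicate point is the order of the argument. The signs of $\delta_tG^n$ and $\delta_tH^n$, which are what make the matrix an M-matrix, must be established from the level $n-1$ data before the level-$n$ system is assembled. This is possible because \eqref{eqs2_13.c}--\eqref{eqs2_13.d} are explicit in time.

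The degenerate case $M=2$ causes no trouble: the system is $1\times1$ and Lemma \ref{lem3} applies trivially.
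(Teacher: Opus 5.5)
Your proposal is correct and follows the paper's proof step for step: induction on $n$, first using the explicit updates \eqref{eqs2_13.c}--\eqref{eqs2_13.d} with level-$(n-1)$ positivity to get $G^n<G^{n-1}$ and $H^n>H^{n-1}$, then applying Lemma \ref{lem3} to the strictly diagonally dominant tridiagonal level-$n$ system, whose right-hand side is at least $(1/\Delta t-L)\tilde W_j^{n-1}>0$. You also propagate $H^n-G^n>0$ explicitly and keep the factor $(\Delta\tilde z)^2$ in $\gamma$, two points the paper's expanded system leaves tacit or drops.
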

\begin{prf}
From \eqref{eqs2_13.a}, it can be deduced that
    \begin{align*}
&\left[\frac{1}{\Delta t} - \frac{(1-\tilde{z}_j)( G^n -  G^{n-1})}{\Delta \tilde{z} \cdot \Delta t ( H^n -  G^n)} + \frac{\tilde{z}_j( H^n - H^{n-1})}{\Delta \tilde{z} \cdot \Delta t ( H^n - G^n)} + \frac{2d}{( H^n - G^n)^2}\right]\tilde{W}_j^n \notag\\
&+ \left[\frac{(1-\tilde{z}_j)( G^n -  G^{n-1})}{\Delta \tilde{z} \cdot \Delta t ( H^n - G^n)} - \frac{d}{( H^n - G^n)^2}\right] \tilde{W}_{j-1}^n
-\left[ \frac{\tilde{z}_j( H^n - H^{n-1})}{\Delta \tilde{z} \cdot \Delta t ( H^n - G^n)} + \frac{d}{( H^n - G^n)^2} \right] \tilde{W}_{j+1}^n \notag \\
= &\frac{1}{\Delta t} \tilde{W}_j^{n-1} + f(\tilde{W}_j^{n-1}), \quad 1 \le  j \le M-1, \ 1 \le n \le N.
\end{align*}
Rewriting the above equation in matrix form yields
\begin{equation}\label{eqs3_20}
\tilde A_n\tilde{\mathbf{W}}_n=\tilde{\mathbf{b}}_n,\ 1\le n \le N,
\end{equation}
where
$
\tilde{\mathbf{W}}_n=\left(\tilde{W}_1^n, \tilde{W}_2^n, \ldots, \tilde{W}_{M-1}^n \right)^\top,
$
$
\tilde{\mathbf{b}}_n=(\tilde{b}_j^n)_{(M-1)\times1}
$
with
$
\tilde{b}_j^n=\frac{1}{\Delta t} \tilde{W}_j^{n-1}+f(\tilde{W}_j^{n-1}),
$
and the coefficient matrix $\tilde A_n=(\tilde a_{ij}^n)_{(M-1)\times (M-1)}$ is tridiagonal with
\begin{equation}\label{eqs3_21}
    \begin{cases}
        \tilde a_{ij}^n=\frac{1}{\Delta t} - \frac{(1-\tilde{z}_j)( G^n - G^{n-1})}{\Delta \tilde{z} \cdot \Delta t (H^n - G^n)} + \frac{\tilde{z}_j(H^n - H^{n-1})}{\Delta \tilde{z} \cdot \Delta t (H^n - G^n)} + \frac{2d}{(H^n - G^n)^2}, \ i=j,\\
        \tilde a_{ij}^n=\frac{(1-\tilde{z}_j)(G^n - G^{n-1})}{\Delta \tilde{z} \cdot \Delta t (H^n - G^n)} - \frac{d}{(H^n - G^n)^2}, \ i=j+1,\\
        \tilde a_{ij}^n=-\frac{\tilde{z}_j(H^n - H^{n-1})}{\Delta \tilde{z} \cdot \Delta t (H^n - G^n)} - \frac{d}{( H^n - G^n)^2}, \ i=j-1,\\
        \tilde a_{ij}^n=0,\ |i-j|\ge 2.
    \end{cases}
\end{equation}
For $n=1$, it follows from \eqref{eqs2_13.b}, \eqref{eqs2_13.c} and \eqref{eqs2_13.d} that
\begin{equation*}
    G^1=G^0-\frac{\mu_1\Delta t}{\Delta \tilde{z}(H^0- G^0)}\tilde{W}_1^{0},\   H^1=H^0+\frac{\mu_2\Delta t}{\Delta \tilde{z}( H^0- G^0)}\tilde{W}_{M-1}^{0}.
\end{equation*}
Thus, $\tilde{W}_j^0= u_0(\tilde{z_j} h_0)>0$ implies that
\begin{equation}\label{eqs3_22}
     G^1< G^0,\  H^1> H^0.
\end{equation}
In view of \eqref{eqs3_21} and \eqref{eqs3_22}, the element of matrix $\tilde A_1$ in \eqref{eqs3_20} has the following property
\begin{equation}\label{eqs3_23}
 \tilde a_{ii}^1>\sum_{j\neq i}|\tilde a_{ij}^1|,\ \tilde a_{ij}^1<0,\ |i-j|=1.
\end{equation}
Besides, since $\tilde{\mathbf{b}}_1$ in \eqref{eqs3_20} is given by $\tilde b_j^1=\frac{1}{\Delta t} \tilde{W}_j^{0}+f(\tilde{W}_j^{0}),$
it follows from \eqref{eqs1_4} and $\Delta t<1/L$ that
\begin{align}\label{eqs3_25}
    \tilde b_j^1&\ge \frac{1}{\Delta t} \tilde{W}_j^{0}-L\tilde{W}_j^{0}>0.
\end{align}
By combining \eqref{eqs3_23} and \eqref{eqs3_25}, it yields $\tilde{W}_j^{1}>0$ for $1\le j\le M-1$. Hence, for $n=1$, if $\Delta t<1/L$, then \eqref{eqs3_18} and \eqref{eqs3_19} hold.

Suppose that, for some integer $l\ge 2$, \eqref{eqs3_18} and \eqref{eqs3_19} hold for all $1\le n\le l-1$ under the condition $\Delta t<1/L$. We now consider the case $n=l$. By \eqref{eqs2_13.b} and \eqref{eqs2_13.c}, one has
\begin{equation*}
 G^l= G^{l-1}-\frac{\mu_1\Delta t}{\Delta \tilde{z}( H^{l-1}- G^{l-1})}\tilde{W}_1^{l-1},\   H^l= H^{l-1}+\frac{\mu_2\Delta t}{\Delta \tilde{z}( H^{l-1}- G^{l-1})}\tilde{W}_{M-1}^{l-1}.
\end{equation*}
Noting that \eqref{eqs3_18} holds for $1\le n\le l-1$ under the condition $\Delta t<\frac{1}{L}$, we get
\begin{equation}\label{eqs3_26}
 G^l< G^{l-1},\  H^{l}> H^{l-1}.
\end{equation}
Via applying \eqref{eqs3_26} to \eqref{eqs3_21}, it leads to
\begin{equation}\label{eqs3_27}
\tilde a_{ii}^l>\sum_{j\neq i}|\tilde a_{ij}^l|,\     \tilde a_{ij}^l<0,\ |i-j|=1.
\end{equation}
For the vector $\tilde{\mathbf{b}}_l$ in \eqref{eqs3_20}, $\tilde b_j^l=\frac{1}{\Delta t}\tilde{W}_j^{\,l-1}+f\bigl(\tilde{W}_j^{\,l-1}\bigr)$
and $\Delta t<1/L$ imply that
\begin{align}\label{eqs3_29} \tilde b_j^l&\ge \frac{1}{\Delta t} \tilde{W}_j^{l-1}-L\tilde{W}_j^{l-1}  >0.
\end{align}
The combination of \eqref{eqs3_27} and \eqref{eqs3_29} immediately gives $\tilde{W}_j^{l}>0$ for $1\le j\le M-1$. Therefore, by mathematical induction, for $1\le n\le N$, if $\Delta t<1/L$, then the inequalities in \eqref{eqs3_18} and \eqref{eqs3_19} hold.
\end{prf}
\begin{thm}
For numerical scheme \eqref{eqs2_14}, if $\Delta t<1/L$, then the numerical solution satisfies
\begin{equation}\label{eqs3_30}
\hat W_j^n>0,\ 1\leq n\leq N,\  0\le j\le M-1,
\end{equation}
and
\begin{equation}\label{eqs3_31}
\hat R^{n}>\hat R^{n-1},\  1\le n\le N.
\end{equation}
\end{thm}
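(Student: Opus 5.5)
The plan is the same induction on $n$ used for schemes \eqref{eqs2_12} and \eqref{eqs2_13}: first use the explicit update \eqref{eqs2_14.c} to get $\hat R^n>\hat R^{n-1}$, then use this to show that the implicit system for $\hat W^n$ satisfies the hypotheses of Lemma \ref{lem3}. The one new feature is the Neumann condition. In scheme \eqref{eqs2_14} it appears as $\delta_{\hat z}\hat W^n_{\frac12}=0$, that is $\hat W_0^n=\hat W_1^n$, rather than as a separate equation at $j=0$. I will eliminate $\hat W_0^n$ and $\hat W_M^n$ and work with the unknown vector $\hat{\mathbf W}_n=(\hat W_1^n,\dots,\hat W_{M-1}^n)^\top$. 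Positivity of $\hat W_0^n$ then follows at the end from $\hat W_0^n=\hat W_1^n$.

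\textbf{Step 1: the radius.} By \eqref{eqs2_14.b} and \eqref{eqs2_14.c},
$$\hat R^n=\hat R^{n-1}+\frac{2\mu\Delta t}{\Delta \hat z}\hat W_{M-1}^{n-1}.$$
Hence $\hat W^{n-1}_{M-1}>0$ gives $\hat R^n>\hat R^{n-1}$. The case $n=1$ follows from $\hat W_j^0=u_0(\hat z_jh_0)>0$ for $j<M$. In particular $\hat R^n>\hat R^0=h_0^2>0$.

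\textbf{Step 2: the matrix.} Expanding \eqref{eqs2_14.a} for $1\le j\le M-1$ gives the diagonal coefficient
$$\frac{\hat R^n\hat z_j}{\Delta t}+\frac{2d\hat z_j}{(\Delta\hat z)^2}+\frac{1}{\Delta\hat z}\Big(d+\frac{\hat z_j^2(\hat R^n-\hat R^{n-1})}{2\Delta t}\Big).$$
The coefficient of $\hat W_{j+1}^n$ is $-\frac{d\hat z_j}{(\Delta\hat z)^2}-\frac{1}{\Delta\hat z}\big(d+\frac{\hat z_j^2(\hat R^n-\hat R^{n-1})}{2\Delta t}\big)$, and the coefficient of $\hat W_{j-1}^n$ is $-\frac{d\hat z_j}{(\Delta\hat z)^2}$. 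The right-hand side is $\hat R^n\hat z_j\big(\frac{1}{\Delta t}\hat W_j^{n-1}+f(\hat W_j^{n-1})\big)$.

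Since $\hat z_j>0$ and, by Step 1, $\hat R^n-\hat R^{n-1}>0$, both off-diagonal entries are strictly negative. The row sum (diagonal minus the moduli of both neighbours) equals $\hat R^n\hat z_j/\Delta t>0$.

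Two boundary rows need adjustment:
\begin{itemize}
\item Row $j=1$: substituting $\hat W_0^n=\hat W_1^n$ moves $-\frac{d\hat z_1}{(\Delta\hat z)^2}$ onto the diagonal. The new diagonal minus the modulus of the remaining upper entry is still $\hat R^n\hat z_1/\Delta t>0$.
\item Row $j=M-1$: the upper neighbour is dropped because $\hat W_M^n=0$, which only increases the dominance margin.
\end{itemize}
So the resulting $(M-1)\times(M-1)$ tridiagonal matrix $\hat A_n$ has negative sub- and super-diagonals and is strictly diagonally dominant, as Lemma \ref{lem3} requires.

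\textbf{Step 3: the right-hand side and closing the induction.} By \eqref{eqs1_4}, $f(\hat W_j^{n-1})\ge -L\hat W_j^{n-1}$ when $\hat W_j^{n-1}>0$. Hence
$$\hat b_j^n\ge \hat R^n\hat z_j\Big(\frac{1}{\Delta t}-L\Big)\hat W_j^{n-1}>0$$
whenever $\Delta t<1/L$. Lemma \ref{lem3} then gives a unique solution with $\hat W_j^n>0$ for $1\le j\le M-1$, and $\hat W_0^n=\hat W_1^n>0$. This establishes \eqref{eqs3_30} at level $n$ and feeds Step 1 at level $n+1$, so induction completes the proof of \eqref{eqs3_30}--\eqref{eqs3_31}.

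\textbf{Main obstacle.} I expect the only real care to be in the row $j=1$ after eliminating $\hat W_0^n$, since the diagonal dominance there must be rechecked with the folded term. The strictly positive margin $\hat R^n\hat z_1/\Delta t$ settles it. The sign of the convection coefficient $d+\hat z_j^2\delta_t\hat R^n/2$ is not an obstacle, because it is positive automatically once $\hat R^n>\hat R^{n-1}$.
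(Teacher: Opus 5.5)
Your proposal is correct and follows essentially the same route as the paper. The explicit update \eqref{eqs2_14.c} gives $\hat R^n>\hat R^{n-1}$, and folding $\hat W_0^n=\hat W_1^n$ into row $j=1$ yields a strictly diagonally dominant tridiagonal system with negative off-diagonals. The condition $\Delta t<1/L$ makes the right-hand side positive, and Lemma \ref{lem3} plus induction closes the argument. Your row-wise check of the dominance margin $\hat R^n\hat z_j/\Delta t$ is also a bit cleaner than the paper's entry list \eqref{eqs3_33}, which mixes up the indices $\hat z_i$ and $\hat z_j$.
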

\begin{proof}
First, \eqref{eqs2_14.a} implies that
\begin{align*}
&\left( \frac{\hat R^n \hat z_j}{\Delta t} + \frac{2d \hat z_j}{(\Delta \hat z)^2} + \frac{d}{\Delta \hat z} + \frac{\hat z_j^2}{2} \frac{\hat R^n - \hat R^{n-1}}{\Delta t\Delta \hat z} \right) \hat W_j^n
- \left( \frac{d \hat z_j}{(\Delta \hat z)^2} + \frac{d}{\Delta \hat z} + \frac{\hat z_j^2}{2} \frac{\hat R^n - \hat R^{n-1}}{\Delta t\Delta \hat z} \right) \hat W_{j+1}^n
\notag\\
&- \frac{d \hat z_j}{(\Delta \hat z)^2}\hat  W_{j-1}^n = \frac{\hat R^n \hat z_j}{\Delta t} \hat W_j^{n-1} + R^n \hat z_j f(\hat W_j^{n-1}), \quad 1 \leq j \leq M-1, \ 1\le n\le N.
\end{align*}
By \eqref{eqs2_14.b}, we obtain $\hat W_0^n=\hat W_1^n$. Thus, the above equation can be rewritten as
\begin{equation}\label{eqs3_32}
    \hat A_n\hat {\mathbf W}_n=\hat{\mathbf b}_n,
\end{equation}
where
$
\hat {\mathbf W}_n=(\hat W_1^n,\hat W_2^n,\cdots ,\hat W_{M-1})^{\top},
$
$\hat{\mathbf b}_n=(\hat{b}_1^n, \hat{b}_2^n, \ldots, \hat{b}_{M-1}^n)^\top$ with $\hat{b}_j^n=\frac{\hat R^n \hat z_j}{\Delta t} \hat W_j^{n-1} + \hat R^n \hat z_j f(\hat W_j^{n-1}),$
and the matrix $\hat A_n=(\hat{a}_{ij}^n)_{(M-1)\times (M-1)}$ is tridiagonal with
\begin{equation}\label{eqs3_33}
\begin{cases}
\hat a_{11}^n=\frac{\hat R^n \hat z_j}{\Delta t} + \frac{d \hat z_j}{(\Delta \hat z)^2} + \frac{d}{\Delta \hat z} + \frac{\hat z_j^2}{2} \frac{\hat R^n - \hat R^{n-1}}{\Delta t\Delta \hat z}, \\
\hat a_{ij}^n=\frac{\hat R^n \hat z_j}{\Delta t} + \frac{2d \hat z_j}{(\Delta \hat z)^2} + \frac{d}{\Delta \hat z} + \frac{\hat z_j^2}{2} \frac{\hat R^n - \hat R^{n-1}}{\Delta t\Delta \hat z}, \  i=j,\ i\ge 2,\\
\hat a_{ij}^n=- \left( \frac{d \hat z_j}{(\Delta \hat z)^2} + \frac{d}{\Delta \hat z} + \frac{\hat z_j^2}{2} \frac{\hat R^n - \hat R^{n-1}}{\Delta t\Delta \hat z} \right) , \ i=j-1,\\
\hat a_{ij}^n=- \frac{d \hat z_j}{(\Delta \hat z)^2}, \  i=j+1,\\
\hat a_{ij}^n=0,\ |i-j|\ge 2.
\end{cases}
\end{equation}
For the case $n=1$, \eqref{eqs2_14.b}, \eqref{eqs2_14.c} and $\hat W_j^0= u_0(\hat z_j h_0)>0$ yield
\begin{equation}\label{eqs3_34}
\hat R^1=\hat R^0+\frac{2\mu \Delta t}{\Delta \hat z}\hat W_{M-1}^{0}>\hat R^0.
\end{equation}
From \eqref{eqs3_33}, \eqref{eqs3_34}, $\Delta t<1/L$ and \eqref{eqs1_4}, we obtain that the matrix $\hat{A}_1$ in \eqref{eqs3_32} satisfies
\begin{equation}\label{eqs3_35}
     \hat a_{ii}^1 > \sum_{j\neq i} |\hat a_{ij}^1|,\ \hat a_{ij}^1<0,\ |i-j|=1,
\end{equation}
and
\begin{align}\label{eqs3_37}
\hat b_j^1 &\ge \frac{\hat R^1 \hat z_j}{\Delta t} \hat W_j^{0}-L{\hat R^1 \hat z_j} \hat W_j^{0}
>0.
\end{align}
By combining \eqref{eqs3_35} and \eqref{eqs3_37} with Lemma \ref{lem3}, we can obtain $\hat W_j^1>0$ for $1\le j\le M-1$.
From \eqref{eqs2_14.b}, one further has $\hat W_0^1=\hat W_1^1>0$. Therefore, when $\Delta t<\frac{1}{L}$, we can conclude that \eqref{eqs3_30} and \eqref{eqs3_31} hold for $n=1$.

Assume that for a fixed integer $l\ge 2$, \eqref{eqs3_30} and \eqref{eqs3_31} hold for all $1\le n\le l-1$ under $\Delta t<\frac{1}{L}$. We now consider the case $n=l$. In view of  \eqref{eqs2_14.b}, \eqref{eqs2_14.c} and the above assumption, it gives
\begin{equation}\label{eqs3_38}
\hat R^l=\hat R^{l-1}+\frac{2\mu \Delta t}{\Delta \hat z}\hat W_{M-1}^{l-1}>\hat R^{l-1}.
\end{equation}
Based on
\eqref{eqs3_33}, \eqref{eqs3_38}, $\Delta t < \frac{1}{L}$ and \eqref{eqs1_4}, the elements of the matrix $\hat{A}_l$ and the vector $\hat{\mathbf{b}}_l$ in \eqref{eqs3_32} satisfy
\begin{equation}\label{eqs3_39}
     \hat a_{ii}^l > \sum_{j\neq i} |\hat a_{ij}^l|,\ \hat a_{ij}^l<0,\ |i-j|=1,
\end{equation}
and
\begin{align}\label{eqs3_41}
\hat b_j^l &\ge \frac{\hat R^l \hat z_j}{\Delta t} \hat W_j^{l-1}-L{\hat R^l \hat z_j} \hat W_j^{l-1}>0.
\end{align}
According to \eqref{eqs3_39} and \eqref{eqs3_41}, it is obvious that $\hat W_j^l>0,\ 1 \le j \le M-1$. Furthermore, from \eqref{eqs2_14.b}, we obtain $\hat W_0^l=\hat W_1^l>0$. It means that $\hat W_j^l>0,\ 0 \le j \le M-1$.
Thus, by mathematical induction, for $1\le n \le N$, if $\Delta t <\frac{1}{L}$, then \eqref{eqs3_30} and \eqref{eqs3_31} hold.
\end{proof}
\subsection{Stability}
Now we discuss the stability of the numerical schemes \eqref{eqs2_12}--\eqref{eqs2_14}.
Let \( V_{1} = \{v_j \mid 0\le j \le M,\ v_M=0\} \).
In order to derive the stability of the numerical scheme \eqref{eqs2_12}, we first define a useful norm.
For \( u, v \in V_{1} \), define
$
\|u\| = \sqrt{(u, u)} \text{ with }(u, v) = \Delta z \left( \frac{1}{2} u_0 v_0 + \sum_{j=1}^{M-1} u_j v_j \right).\label{eqs3_42}
$
Based on the defined norm, we have the following result.
\begin{thm}
Let $\{W_j^n \mid 0 \leq j \leq M-1,\ 0 \leq n \leq N\}$ be the solution to the numerical scheme \eqref{eqs2_12}. If $\Delta t < \frac{1}{2L}$, then there exists a constant $C$ such that
$
\left\| W^n \right\| \leq C\|W^{0}\|.
$
\end{thm}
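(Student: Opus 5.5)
We will prove the bound with a discrete energy estimate in the weighted inner product $(\cdot,\cdot)$. The weight $\tfrac12$ at $j=0$ is chosen so that the modified second difference $\delta_z^2W_0^n=\frac{2}{\Delta z}\delta_zW^n_{1/2}$ at the Neumann node fits a discrete summation-by-parts identity. Throughout we use the previous theorem: for $\Delta t<1/(2L)<1/L$ we have $W_j^n>0$ and $R^n>R^{n-1}>\dots>R^0=h_0^2>0$.

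\textbf{Step 1 (energy identity).} Take the inner product of \eqref{eqs2_12.a}--\eqref{eqs2_12.b} with $W^n$. Multiply the $j=0$ equation by $\frac{\Delta z}{2}W_0^n$ and the $j$-th equation by $\Delta zW_j^n$ for $1\le j\le M-1$, then sum. Using $W_M^n=0$ and summation by parts, the diffusion term becomes
\[
-d\,\Delta z\Big(\tfrac12\delta_z^2W_0^nW_0^n+\sum_{j=1}^{M-1}\delta_z^2W_j^nW_j^n\Big)=d\,\Delta z\sum_{j=0}^{M-1}\big(\delta_zW^n_{j+\frac12}\big)^2 \ge 0 .
\]
For the time term, the inequality $(a-b)a\ge\frac12(a^2-b^2)$ gives $R^n(\delta_tW^n,W^n)\ge\frac{R^n}{2\Delta t}\big(\|W^n\|^2-\|W^{n-1}\|^2\big)$.

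\textbf{Step 2 (advection term, the main obstacle).} The advection contribution is
\[
-\frac{\delta_tR^n}{2}\,\Delta z\sum_{j=1}^{M-1}z_jW_j^n\,\delta_zW^n_{j+\frac12},
\]
with coefficient $\delta_tR^n>0$. Writing $W_j(W_{j+1}-W_j)=\tfrac12(W_{j+1}^2-W_j^2)-\tfrac12(W_{j+1}-W_j)^2$ and summing by parts against $z_j$ (using $W_M=0$ and $z_{j+1}-z_j=\Delta z$) gives
\[
-\Delta z\sum_j z_jW_j\,\delta_zW_{j+\frac12}=\tfrac12\Delta z\sum_{j\ge1}W_j^2+\tfrac12\Delta z\sum_j z_j\Delta z\big(\delta_zW_{j+\frac12}\big)^2 \ge 0
\]
up to harmless boundary pieces. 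So with the sign $\delta_tR^n>0$ supplied by the monotonicity theorem, the whole advection term is nonnegative and can be dropped. If a boundary piece has the wrong sign, I would instead bound it by $\frac{\delta_tR^n}{2}\|W^n\|^2$ and absorb it in Step 4, since $\sum_n\Delta t\,\delta_tR^n=R^N-R^0$ is a telescoping sum. Checking the sign of the boundary terms in this summation by parts is the step I expect to require the most care.

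\textbf{Step 3 (reaction term).} By \eqref{eqs1_4}, $|f(W)|\le L|W|$, so Cauchy--Schwarz and Young give
\[
R^n\big(f(W^{n-1}),W^n\big)\le \tfrac{LR^n}{2}\big(\|W^{n-1}\|^2+\|W^n\|^2\big).
\]

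\textbf{Step 4 (Gronwall).} Divide by $R^n/(2\Delta t)$. In the first route of Step 2 this yields $(1-L\Delta t)\|W^n\|^2\le(1+L\Delta t)\|W^{n-1}\|^2$. In the absorbed route the coefficient is instead $1-L\Delta t-\Delta t\,\delta_tR^n/R^n$, and the extra factor $1+C\Delta t\,\delta_tR^n/R^0$ has a product bounded by $\exp\big(C(R^N-R^0)/R^0\big)$. The condition $\Delta t<\frac1{2L}$ gives $1-L\Delta t>\tfrac12$, hence $\frac{1+L\Delta t}{1-L\Delta t}\le 1+4L\Delta t$. Iterating, we get $\|W^n\|^2\le e^{4LT}\|W^0\|^2$.

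Finally, $R^N$ must be bounded for the constant to be finite. Either the first route avoids this entirely, or one bounds $R^N$ through \eqref{eqs2_12.c} using a maximum-principle bound $W_j^n\le(1+L\Delta t)^n\max_j W^0_j$, which follows from the M-matrix structure as in Lemma \ref{lem3}. The conclusion is $\|W^n\|\le C\|W^0\|$.
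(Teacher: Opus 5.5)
Your argument is correct and is essentially the paper's energy proof: the weighted inner product with weight $\tfrac12$ at $j=0$, summation by parts for the diffusion term, the sign $\delta_tR^n>0$ from the monotonicity theorem to make the advection term nonnegative, Young's inequality for the reaction term, and iteration of $(1-L\Delta t)\|W^n\|^2\le(1+L\Delta t)\|W^{n-1}\|^2$. Your advection identity is in fact exact, because $z_1=\Delta z$ and $W_M^n=0$ leave no boundary remainder, so the fallback route and the bound on $R^N$ are never needed; the only other difference is cosmetic, in that you weight the $j=0$ equation by $\tfrac{\Delta z}{2}$ directly, whereas the paper multiplies it by $\Delta z$ and then removes half of it using the boundary equation itself.
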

\begin{proof}
 Multiplying both sides of \eqref{eqs2_12.a} by $\Delta z W_0^n$, and multiplying both sides of \eqref{eqs2_12.b} by $\Delta z W_j^n$ and summing from $j=1$ to $M-1$, we get
    \begin{equation}\label{eqs3_44}
    R^n \delta_t W_0^n\cdot \Delta z W_0^n - d\delta_z^2W_0^n \cdot \Delta z W_0^n = R^nf(W_0^{n-1}) \Delta z W_0^n,
    \end{equation}
and
\begin{equation}\label{eqs3_45}
\sum_{j=1}^{M-1}R^n \delta_t W_j^n \cdot\Delta z W_j^n - \sum_{j=1}^{M-1} \frac{z_j\delta_t R^n}{2} \delta_z W_{j+\frac12}^n \cdot \Delta z W_j^n
- \sum_{j=1}^{M-1}d\delta_z^2 W_j^n \cdot \Delta z W_j^n
= \sum_{j=1}^{M-1}R^n{f} (W_j^{n-1}) \cdot\Delta z W_j^n .
\end{equation}
By adding \eqref{eqs3_44} and \eqref{eqs3_45} side by side, one obtains
\begin{align}\label{eqs3_46}
&\sum_{j=1}^{M-1} R^n\delta_t W_j^n \cdot\Delta z W_j^n + R^n \delta_t W_0^n\cdot \Delta z W_0^n - \sum_{j=1}^{M-1} \frac{z_j\delta_t R^n}{2} \delta_z W_{j+\frac12}^n \cdot \Delta z W_j^n\notag\\
&-\sum_{j=1}^{M-1}d\delta_z^2 W_j^n \cdot \Delta z W_j^n -  d\delta_z^2W_0^n \cdot \Delta z W_0^n
= \sum_{j=1}^{M-1}R^n{f} (W_j^{n-1}) \cdot\Delta z W_j^n  + R^nf(W_0^{n-1}) \cdot\Delta z W_0^n.
\end{align}
Since
\begin{align}\label{eqs3_47}
    \sum_{j=1}^{M-1}\delta_z^2 W_j^n \cdot W_j^n&=\sum_{j=1}^{M-1}\frac{1}{\Delta z}(\delta_z W_{j+\frac12}^n-\delta_z W_{j-\frac12}^n)\cdot W_j^n
    =\frac{1}{\Delta z}\sum_{j=1}^{M-1}\delta_z W_{j+\frac12}^n\cdot W_j^n-\frac{1}{\Delta z}\sum_{j=0}^{M-2}\delta_z W_{j+\frac12}^n\cdot W_{j+1}^n,
\end{align}
and in view of \eqref{eqs3_44}, it follows that
\begin{equation}\label{eqs3_48}
      d\delta_z^2W_0^n \cdot \Delta z W_0^n = R^n \delta_t W_0^n\cdot \Delta z W_0^n-R^nf(W_0^{n-1}) \Delta z W_0^n.
\end{equation}
Then, using \eqref{eqs3_47} and \eqref{eqs3_48} to replace the fourth and fifth terms on the left-hand side of \eqref{eqs3_46} yields
\begin{align}\label{eqs3_49}
&\sum_{j=1}^{M-1} R^n\delta_t W_j^n \cdot\Delta z W_j^n + \frac{R^n}{2}\delta_t W_0^n\cdot \Delta z W_0^n - \sum_{j=1}^{M-1} \frac{z_j\delta_t R^n}{2} \delta_z W_{j+\frac12}^n \cdot \Delta z W_j^n
+ d \sum_{j=0}^{M-1} \Delta z \cdot \left(W_{j+\frac12}^n \right)^2\notag\\
= &\sum_{j=1}^{M-1}R^n{f} (W_j^{n-1}) \cdot\Delta z W_j^n + \frac{1}{2}R^nf(W_0^{n-1}) \Delta z W_0^n.
\end{align}
Note that
\begin{equation}\label{eqs3_n}
    (W_j^n-W_j^{n-1})W_j^n= \frac{1}{2} \left[(W_j^n)^2 - (W_j^{n-1})^2\right] + \frac{1}{2}(W_j^n - W_j^{n-1})^2 ,
\end{equation}
and
\begin{equation}\label{eqs3_j}
    (W_j^n-W_{j+1}^{n})W_j^n= \frac{1}{2} \left[(W_j^n)^2 - (W_{j+1}^{n})^2\right] + \frac{1}{2}(W_j^n - W_{j+1}^{n})^2.
\end{equation}
Therefore, the first, second and third terms on the left-hand side of \eqref{eqs3_49} can be bounded by
\begin{align}\label{eqs3_52}
    &\sum_{j=1}^{M-1} R^n\delta_t W_j^n \cdot\Delta z W_j^n + \frac{R^n}{2}\delta_t W_0^n\cdot \Delta z W_0^n - \sum_{j=1}^{M-1} \frac{z_j\delta_t R^n}{2} \delta_z W_{j+\frac12}^n \cdot \Delta z W_j^n \notag\\
    \ge &\sum_{j=1}^{M-1}\frac{\Delta zR^n}{2\Delta t}\left[(W_j^n)^2+\frac{1}{2}(W_0^n)^2-(W_j^{n-1})^2-\frac{1}{2}(W_0^{n-1})^2\right]+\sum_{j=1}^{M-1}\frac{\Delta z\delta_t R^n}{4}\left[j(W_j^n)^2 - j(W_{j+1}^{n})^2 \right]\notag\\
    \ge &\frac{R^n}{2\Delta t}\|W^n\|-\frac{R^n}{2\Delta t}\|W^{n-1}\|.
\end{align}
From \eqref{eqs1_4}, the right-hand side of \eqref{eqs3_49} can be estimated as
\begin{align*}
    &\sum_{j=1}^{M-1} R^n f(W_j^{n-1}) \cdot \Delta z W_j^n + \frac{1}{2}R^n f(W_0^{n-1}) \cdot \Delta z W_0^n\notag\\
    \le &\sum_{j=1}^{M-1} LR^n W_j^{n-1} \cdot \Delta z W_j^n + \frac{1}{2} LR^n W_0^{n-1}\cdot \Delta z W_0^n
    \le LR^n(W^{n-1},W^n).
\end{align*}
 By the Cauchy-Schwarz inequality, it yields
\begin{align}\label{eqs3_54}
    \sum_{j=1}^{M-1} R^n f(W_j^{n-1}) \cdot \Delta z W_j^n + \frac{1}{2}R^n f(W_0^{n-1}) \cdot \Delta z W_0^n
    \le \frac{1}{2}LR^n \| W^{n-1} \|^2 + \frac{1}{2}LR^n \| W^n \|^2.
\end{align}
Hence, it follows from \eqref{eqs3_49}--\eqref{eqs3_54} that
\begin{equation*}
    \|W^n\|^2\le \frac{1+L\Delta t}{1-L\Delta t}\|W^{n-1}\|^2,\ \text{i.e.,}\ \|W^n\|^2\le \left(1+\frac{2L\Delta t}{1-L\Delta t}\right)^n\|W^{0}\|^2.
\end{equation*}
%
Noting that $\lim_{y\to 0}(1+y)^{\frac{1}{y}}=e$ and$n\Delta t\le T$, we conclude that there exists a constant $C$ such that
$
\|W^n\|\le C\|W^0\|.
$
\end{proof}

Next, we employ an analogous argument to analyze the stability of numerical schemes \eqref{eqs2_13} and \eqref{eqs2_14}. In order to consider the numerical scheme \eqref{eqs2_13}, we introduce the following norm.
Let \( V_{2} = \{v_j \mid 0\le j \le M,\ v_0=0, \ v_{M}=0\} \). For \( u, v \in V_{2} \), denote
$
\|u\|_d = \sqrt{(u, u)_d} \text{ with } (u, v)_d = \Delta \tilde z \sum_{j=1}^{M-1} u_j v_j.
$
Then the following result hold.
\begin{thm}
Suppose that $\{\tilde W_j^n \mid 1 \leq j \leq M-1,\ 1 \leq n \leq N\}$ is the solution to the numerical scheme \eqref{eqs2_13}. If $\Delta t<\frac{1}{2L}$, then there exists a constant $C$ such that
$
\| \tilde W^n \|_d \leq C\|\tilde W^{0}\|_d.
$
\end{thm}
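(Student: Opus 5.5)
We mimic the energy argument used for scheme \eqref{eqs2_12}. We multiply \eqref{eqs2_13.a} by $\Delta\tilde z\,\tilde W_j^n$ and sum over $1\le j\le M-1$. Since $\tilde W^n\in V_2$ by \eqref{eqs2_13.b}, no boundary weighting is needed. Writing $D^n=H^n-G^n>0$, the plan is to show that each term on the left-hand side is bounded below by a quantity that is either a telescoping difference of $\|\tilde W\|_d^2$ or nonnegative. The right-hand side is then handled with \eqref{eqs1_4}.

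\textbf{Diffusion term.} Summation by parts, as in \eqref{eqs3_47}, uses $\tilde W_0^n=\tilde W_M^n=0$ and gives
\[
-\frac{d}{(D^n)^2}\sum_{j=1}^{M-1}\delta_{\tilde z}^2\tilde W_j^n\,\Delta\tilde z\,\tilde W_j^n=\frac{d}{(D^n)^2}\sum_{j=0}^{M-1}\Delta\tilde z\,\bigl(\delta_{\tilde z}\tilde W_{j+\frac12}^n\bigr)^2\ge 0 .
\]

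\textbf{Time term.} Identity \eqref{eqs3_n} gives $\sum_j\delta_t\tilde W_j^n\,\Delta\tilde z\,\tilde W_j^n\ge\frac{1}{2\Delta t}\bigl(\|\tilde W^n\|_d^2-\|\tilde W^{n-1}\|_d^2\bigr)$.

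\textbf{Convection terms (the main step).} We need them to be nonnegative, and here we use the monotonicity from Theorem~\ref{thm3_1}: $\delta_tH^n>0$ and $\delta_tG^n<0$, valid since $\Delta t<1/(2L)<1/L$.

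For the $H$-term, \eqref{eqs3_j} gives $-(\tilde W_{j+1}^n-\tilde W_j^n)\tilde W_j^n\ge\frac12\bigl[(\tilde W_j^n)^2-(\tilde W_{j+1}^n)^2\bigr]$. Multiplying by $\tilde z_j=j\Delta\tilde z\ge0$ and summing, Abel summation yields
\[
\tfrac12\sum_{j=1}^{M-1}\bigl[j(\tilde W_j^n)^2-j(\tilde W_{j+1}^n)^2\bigr]=\tfrac12\sum_{j=1}^{M-1}(\tilde W_j^n)^2\ge0 ,
\]
because $\tilde W_M^n=0$. With the positive prefactor $\delta_tH^n/D^n$ this term is nonnegative.

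For the $G$-term, $-\frac{(1-\tilde z_j)\delta_tG^n}{D^n}$ is nonnegative and multiplies $(\tilde W_j^n-\tilde W_{j-1}^n)\tilde W_j^n\ge\frac12\bigl[(\tilde W_j^n)^2-(\tilde W_{j-1}^n)^2\bigr]$. Summing with weight $M-j$ and using $\tilde W_0^n=0$ gives
\[
\tfrac12\sum_{j=1}^{M-1}(M-j)\bigl[(\tilde W_j^n)^2-(\tilde W_{j-1}^n)^2\bigr]=\tfrac12\sum_{j=1}^{M-1}(\tilde W_j^n)^2\ge0 .
\]
Care is needed with the index shifts in both Abel summations. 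This is the step where the upwinded directions in \eqref{eqs2_13.a}, namely backward differences with $G$ and forward differences with $H$, are essential.

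\textbf{Conclusion.} The right-hand side is bounded by \eqref{eqs1_4} and Cauchy--Schwarz:
\[
\Delta\tilde z\sum_j f(\tilde W_j^{n-1})\tilde W_j^n\le L\|\tilde W^{n-1}\|_d\|\tilde W^n\|_d\le\frac L2\bigl(\|\tilde W^{n-1}\|_d^2+\|\tilde W^n\|_d^2\bigr).
\]
Here we use $|f(u)|\le L|u|$, which follows from $-Lu\le f(u)\le Lu$ (and positivity makes it direct anyway). Combining all terms gives
\[
\|\tilde W^n\|_d^2\le\frac{1+L\Delta t}{1-L\Delta t}\,\|\tilde W^{n-1}\|_d^2 .
\]
Iterating, and using $\frac{1+L\Delta t}{1-L\Delta t}\le 1+4L\Delta t$ for $\Delta t<1/(2L)$, we get $\|\tilde W^n\|_d^2\le e^{4LT}\|\tilde W^0\|_d^2$. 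This is the claim with $C=e^{2LT}$.
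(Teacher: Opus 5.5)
Your proof is correct and follows the paper's argument essentially step for step. It uses the same test function $\Delta\tilde z\,\tilde W_j^n$, the identities \eqref{eqs3_n} and \eqref{eqs3_j} with weights $j$ and $M-j$ for the two convection terms, and summation by parts for the diffusion term, leading to the same recursion $\|\tilde W^n\|_d^2\le\frac{1+L\Delta t}{1-L\Delta t}\|\tilde W^{n-1}\|_d^2$. Your version is slightly more careful than the paper's: you explicitly invoke the signs $\delta_t G^n<0<\delta_t H^n$ from Theorem~\ref{thm3_1}, which the paper uses tacitly in \eqref{eqs3_61}, and you obtain the explicit constant $C=e^{2LT}$ via $\frac{1+L\Delta t}{1-L\Delta t}\le 1+4L\Delta t$.
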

\begin{proof}
Multiplying both sides of \eqref{eqs2_13.a} by $\Delta \tilde z\tilde W_j^n$ and summing from $j=1$ to $M-1$ yields
\begin{align}\label{eqs3_60}
    &\sum_{j=1}^{M-1}\delta_t \tilde W_j^n\cdot \Delta \tilde{z} \tilde W_j^n
- \sum_{j=1}^{M-1}\dfrac{(1-\tilde{z}_j)\delta_t G^n}{H^n - G^n}\delta_{\tilde{z}} \tilde W_{j-\frac12}^n\cdot \Delta \tilde{z} \tilde W_j^n\notag\\
&- \sum_{j=1}^{M-1}\dfrac{\tilde{z}_j\delta_t H^n}{H^n - G^n}\delta_{\tilde{z}} \tilde W_{j+\frac12}^n\cdot \Delta \tilde{z} \tilde W_j^n
- \sum_{j=1}^{M-1}\frac{d}{(H^n - G^n)^2}\,\delta_{\tilde{z}}^2 \tilde W_j^n\cdot \Delta \tilde{z} \tilde W_j^n
= \sum_{j=1}^{M-1}f(\tilde W_j^{n-1})\cdot\Delta \tilde{z} \tilde W_j^n.
\end{align}
According to \eqref{eqs3_n} and \eqref{eqs3_j}, the first three terms on the left-hand side of \eqref{eqs3_60} gives
\begin{align}\label{eqs3_61}
    &\sum_{j=1}^{M-1}\delta_t \tilde W_j^n\cdot \Delta \tilde{z} \tilde W_j^n
- \sum_{j=1}^{M-1}\frac{(1-\tilde{z}_j)\delta_t  G^n}{H^n - G^n}\delta_{\tilde{z}} \tilde W_{j-\frac12}^n\cdot \Delta \tilde{z} \tilde W_j^n
- \sum_{j=1}^{M-1}\frac{\tilde{z}_j\delta_t H^n}{ H^n - G^n}\delta_{\tilde{z}} \tilde W_{j+\frac12}^n\cdot \Delta \tilde{z} \tilde W_j^n\notag\\
\ge &\sum_{j=1}^{M-1}\frac{\Delta \tilde z}{2\Delta t}\left[(\tilde W_j^n)^2-(\tilde W_j^{n-1})^2\right]- \sum_{j=1}^{M-1}\dfrac{\Delta \tilde{z}\delta_t G^n}{2(H^n -  G^n)}\left[(M-j)(\tilde W_j^n)^2-(M-j)(\tilde W_{j-1}^n)^2\right]\notag\\
&\ +\sum_{j=1}^{M-1}\dfrac{\Delta \tilde{z}\delta_t H^n}{2(H^n - G^n)}\left[j(\tilde W_j^n)^2-j(\tilde W_{j+1}^n)^2\right]\notag\\
\ge & \frac{1}{2\Delta t}\|\tilde W^n\|_d-\frac{1}{2\Delta t}\|\tilde W^{n-1}\|_d.
\end{align}
By using \eqref{eqs3_47}, the following estimate holds for the fourth term on the left-hand side of \eqref{eqs3_60}
\begin{align}\label{eqs3_62}
     - \sum_{j=1}^{M-1}\frac{d}{(H^n - G^n)^2}\,\delta_{\tilde{z}}^2 \tilde W_j^n\cdot \Delta \tilde{z} \tilde W_j^n
     \ge \sum_{j=1}^{M-1}\frac{d\Delta \tilde{z}}{(H^n - G^n)^2}(\delta_{\tilde{z}}\tilde{W})^2>0.
\end{align}
In addition, it follows from \eqref{eqs1_4} and the Cauchy-Schwarz inequality that
\begin{align}\label{eqs3_63}
\sum_{j=1}^{M-1}  f(\tilde W_j^{n-1}) \cdot \Delta \tilde{z}\tilde W_j^n
\leq \sum_{j=1}^{M-1} L \tilde W_j^{n-1} \cdot \Delta \tilde{z}  \tilde W_j^n
\leq L(\tilde W^{n-1}, \tilde W^n)\leq \frac{L}{2} \| \tilde W^{n-1} \|_d^2 + \frac{L}{2} \| \tilde W^n \|_d^2.
\end{align}
Therefore, the combination of \eqref{eqs3_61}, \eqref{eqs3_62} and \eqref{eqs3_63} immediately yields
\begin{equation*}
    \|\tilde W^n\|_d^2\le \frac{1+L\Delta t}{1-L\Delta t}\|\tilde W^{n-1}\|_d^2\leq\left(1+\frac{2L\Delta t}{1-L\Delta t}\right)^n\|\tilde W^{0}\|_d^2.
\end{equation*}
It is clear that the above result implies that
there exists a constant $C>0$ such that
$
\|\tilde W^n\|_d\le C\|\tilde W^0\|_d.
$
\end{proof}

To discuss the numerical scheme \eqref{eqs2_14}, for $u,v\in V_1,$ we define
$
(u, v)_z = \Delta \hat z \sum_{j=1}^{M-1}\hat z_j u_j v_j, \text{ and } \|u\|_z = \sqrt{(u, u)_z}.
$
Now we state the following result.
\begin{thm}
Suppose that $\{\hat W_j^n \mid 0 \leq j \leq M-1,\ 1 \leq n \leq N\}$ is the solution to the numerical scheme \eqref{eqs2_14}. If $\Delta t<\frac{1}{2L}$, then there exists a positive constant $C$ such that
$
\| \hat W^n \|_z \leq C\|\hat W^{0}\|_z.
$
\end{thm}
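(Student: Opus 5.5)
We follow the energy method used for schemes \eqref{eqs2_12} and \eqref{eqs2_13}, now with the weighted inner product $(\cdot,\cdot)_z$. Multiply \eqref{eqs2_14.a} by $\Delta\hat z\,\hat W_j^n$ and sum over $1\le j\le M-1$. The weight $\hat z_j$ is already built into the time-derivative, diffusion and reaction terms, so these sums are naturally $(\cdot,\cdot)_z$ quantities. The extra first-order terms, $-d\,\delta_{\hat z}\hat W_{j+\frac12}^n$ and $-\frac{\hat z_j^2\delta_t\hat R^n}{2}\delta_{\hat z}\hat W_{j+\frac12}^n$, must be absorbed.

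For the time term, use \eqref{eqs3_n}:
\[
\sum_{j=1}^{M-1}\hat R^n\hat z_j\,\delta_t\hat W_j^n\,\Delta\hat z\,\hat W_j^n\ \ge\ \frac{\hat R^n}{2\Delta t}\Bigl(\|\hat W^n\|_z^2-\|\hat W^{n-1}\|_z^2\Bigr).
\]
For the convection term with $\hat R$, use \eqref{eqs3_j}, $(\hat W_j-\hat W_{j+1})\hat W_j\ge\frac12(\hat W_j^2-\hat W_{j+1}^2)$, together with $\delta_t\hat R^n>0$ from \eqref{eqs3_31}. This gives
\[
\frac{\delta_t\hat R^n}{4}\sum_{j}\hat z_j^2\bigl(\hat W_j^2-\hat W_{j+1}^2\bigr).
\]
Since $\hat z_j^2$ is increasing and $\hat W_M^n=0$, summation by parts turns this into $\frac{\delta_t\hat R^n}{4}\bigl(\hat z_1^2\hat W_1^2+\sum_{j\ge2}(\hat z_j^2-\hat z_{j-1}^2)\hat W_j^2\bigr)\ge0$, which can be dropped.

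The diffusion part is the main step. Combining $-d\hat z_j\delta_{\hat z}^2\hat W_j-d\,\delta_{\hat z}\hat W_{j+\frac12}$ gives exactly the discrete form of $-(d\hat z\hat W_{\hat z})_{\hat z}$, up to the placement of weights. Explicitly,
\[
-\frac{d}{\Delta\hat z}\Bigl[\hat z_{j+1}\delta\hat W_{j+\frac12}-\hat z_j\delta\hat W_{j-\frac12}\Bigr]
=-\frac{d}{\Delta\hat z}\Bigl[\hat z_j\bigl(\delta\hat W_{j+\frac12}-\delta\hat W_{j-\frac12}\bigr)+\Delta\hat z\,\delta\hat W_{j+\frac12}\Bigr],
\]
so the combined operator equals $-\frac{d}{\Delta\hat z}\bigl(\hat z_{j+1}\delta\hat W_{j+\frac12}-\hat z_j\delta\hat W_{j-\frac12}\bigr)$. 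Multiplying by $\Delta\hat z\hat W_j^n$ and summing by parts, as in \eqref{eqs3_47}, with $\hat W_M^n=0$ and the Neumann condition $\delta_{\hat z}\hat W_{\frac12}^n=0$ from \eqref{eqs2_14.b} killing the boundary term at $j=1$, yields
\[
d\,\Delta\hat z\sum_{j=1}^{M-1}\hat z_{j+1}\bigl(\delta_{\hat z}\hat W_{j+\frac12}^n\bigr)^2\ \ge 0 .
\]
Verifying this identity carefully, including the index shift and the boundary terms, is the obstacle. If the weight placement turns out not to match exactly, the fallback is to bound the leftover term $d\sum\Delta\hat z\,\delta\hat W_{j+\frac12}\hat W_j$ by \eqref{eqs3_j}: it equals $d\sum(\hat W_j^2-\hat W_{j+1}^2)/2$ plus a nonnegative square, and telescopes to $\frac d2\hat W_1^2\ge0$.

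For the reaction term, \eqref{eqs1_4} and Cauchy--Schwarz in $(\cdot,\cdot)_z$ give
\[
\sum_{j}\hat R^n\hat z_j f(\hat W_j^{n-1})\,\Delta\hat z\,\hat W_j^n\ \le\ \frac{L\hat R^n}{2}\Bigl(\|\hat W^{n-1}\|_z^2+\|\hat W^n\|_z^2\Bigr).
\]
Dividing by $\hat R^n>0$ (positive by \eqref{eqs3_31}) yields
\[
\|\hat W^n\|_z^2\le\frac{1+L\Delta t}{1-L\Delta t}\,\|\hat W^{n-1}\|_z^2 .
\]
With $\Delta t<\frac1{2L}$ we have $\frac{2L\Delta t}{1-L\Delta t}\le4L\Delta t$, so iterating gives $\|\hat W^n\|_z^2\le e^{4LT}\|\hat W^0\|_z^2$. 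Hence $\|\hat W^n\|_z\le C\|\hat W^0\|_z$ with $C=e^{2LT}$.
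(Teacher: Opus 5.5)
Your proof is correct. It departs from the paper only in how it treats the diffusion term together with the $d$-part of the first-order term.

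The paper handles $-d\hat z_j\delta_{\hat z}^2\hat W_j^n$ and $-\bigl(d+\frac{\hat z_j^2\delta_t\hat R^n}{2}\bigr)\delta_{\hat z}\hat W_{j+\frac12}^n$ separately, applying $(a-b)a\ge\frac12(a^2-b^2)$ to each. The weighted second difference is bounded below by $-\frac d2(\hat W_0^n)^2+\frac{Md}{2}(\hat W_{M-1}^n)^2$, and the first-order term by $\frac d2(\hat W_1^n)^2$. The Neumann condition $\hat W_0^n=\hat W_1^n$ then cancels the negative boundary contribution.

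You instead combine the two $d$-terms into the conservative difference $-\frac{d}{\Delta\hat z}\bigl(\hat z_{j+1}\delta_{\hat z}\hat W_{j+\frac12}^n-\hat z_j\delta_{\hat z}\hat W_{j-\frac12}^n\bigr)$. This identity is exact, and the summation by parts goes through as you describe: $\hat W_M^n=0$ removes the right boundary term and $\hat z_1\delta_{\hat z}\hat W_{\frac12}^n=0$ removes the left one. The result is exactly $d\,\Delta\hat z\sum_{j=1}^{M-1}\hat z_{j+1}(\delta_{\hat z}\hat W_{j+\frac12}^n)^2\ge 0$.

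So your fallback is not needed. As stated it would not suffice anyway, because the weighted second-difference term by itself is not sign-definite in general. That is exactly why the paper has to borrow the $\frac d2(\hat W_1^n)^2$ from the convective term.

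Your route gives an equality rather than a chain of inequalities. It exhibits a discrete weighted Dirichlet energy mirroring $d\int_0^1\hat z\,(\hat w_{\hat z})^2\,\mathrm{d}\hat z$ for the radial operator $(\hat z\hat w_{\hat z})_{\hat z}$, and it shows clearly where the Neumann condition enters. The paper's route is more mechanical, reusing the same elementary inequality as for the other two schemes.

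Your explicit constant $C=e^{2LT}$, obtained from $\frac{2L\Delta t}{1-L\Delta t}<4L\Delta t$, is also a cleaner finish than the paper's limit argument.
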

\begin{proof}
Multiplying both sides of \eqref{eqs2_14.a} by $\Delta \hat z \hat W_j^n$ and summing from $j=1$ to $M-1$ yields
\begin{align}  \label{eqs3_68}
&\sum_{j=1}^{M-1} \hat R^n\hat z_j\delta_t \hat W_j^n \cdot \Delta \hat z \hat W_j^n - \sum_{j=1}^{M-1} d\hat z_j\delta_{\hat z}^2 \hat W_j^n \cdot \Delta \hat z \hat W_j^{n} \notag\\
&- \sum_{j=1}^{M-1}\left( d + \frac{\hat z_j^2 \delta_t\hat R^n}{2} \right)\delta_{\hat z} \hat W_{j+\frac12}^n\cdot \Delta \hat z \hat W_{j}^n = \sum_{j=1}^{M-1} \hat R^n\hat z_j f(\hat W_j^{n-1})\cdot \Delta \hat z\hat W_j^n.
\end{align}
Applying \eqref{eqs3_n} to the first term on the left-hand side of \eqref{eqs3_68}, we get
\begin{align}   \label{eqs3_69}
\sum_{j=1}^{M-1} \frac{\hat R^n\hat z_j}{\Delta t} \left( \hat W_j^n - \hat W_j^{n-1} \right) \cdot \Delta \hat z \hat W_j^n
\geq \frac{\Delta \hat z\hat R^n}{2\Delta t}\sum_{j=1}^{M-1}\hat z_j\left( \left(\hat W_j^n\right)^2 - \left(\hat W_j^{n-1}\right)^2 \right)
=\frac{\hat R^n}{2\Delta t}\|\hat W^n\|_z^2-\frac{\hat R^n}{2\Delta t}\|\hat W^{n-1}\|_z^2.
\end{align}
According to \eqref{eqs3_j}, the second and third terms on the left-hand side of \eqref{eqs3_68} are given by
\begin{align}\label{eqs3_70}
    - \sum_{j=1}^{M-1} d\hat z_j\delta_{\hat z}^2 \hat W_j^n \cdot \Delta \hat z \hat W_j^{n}
    = & \frac{d}{\Delta\hat{z}} \sum_{j=1}^{M-1}\hat z_j\left[ (\hat W_j^n-\hat W_{j+1}^n)\hat W_j^n+(\hat W_j^n-\hat W_{j-1}^n)\hat W_j^n\right]\notag\\
    \ge & \frac{d}{2} \sum_{j=1}^{M-1}\left[\left(j(\hat W_j^n)^2-j(\hat W_{j+1}^n)^2\right)+\left(j(\hat W_j^n)^2-j(\hat W_{j-1}^n)^2\right)\right]\notag \\
    =&-\frac{d}{2}(\hat W_0^n)^2+\frac{Md}{2}(\hat W_{M-1}^n)^2
    >-\frac{d}{2}(\hat W_0^n)^2,
\end{align}
and
\begin{align}\label{eqs3_71}
    - \sum_{j=1}^{M-1}\left( d + \frac{\hat z_j^2 \delta_t\hat R^n}{2} \right)\delta_{\hat z} \hat W_j^n \cdot \Delta \hat z \hat W_j^n
    =&\sum_{j=1}^{M-1}\left( d + \frac{\hat z_j^2 \delta_t\hat R^n}{2} \right)(\hat W_j^n-\hat W_{j+1}^n)\hat W_j^n\notag\\
    \ge &\sum_{j=1}^{M-1}\left( \frac{d}{2} + \frac{\hat z_j^2 \delta_t\hat R^n}{4} \right)\left[(\hat W_j^n)^2 -(\hat W_{j+1}^n)^2\right]\notag \\
    =&\frac{d}{2}(\hat W_1^n)^2+\frac{\Delta \hat z ^2 \delta_t\hat R^n}{4}\sum_{j=1}^{M-1}j^2\left[(\hat W_j^n)^2 -(\hat W_{j+1}^n)^2\right]
    \ge  \frac{d}{2}(\hat W_1^n)^2.
\end{align}
Therefore, it follows from \eqref{eqs2_14.b} that
\begin{align}\label{eqs3_72}
    - \sum_{j=1}^{M-1} d\hat z_j\delta_{\hat z}^2 \hat W_j^n \cdot \Delta \hat z \hat W_j^{n}  - \sum_{j=1}^{M-1}\left( d + \frac{\hat z_j^2 \delta_t\hat R^n}{2} \right)\delta_{\hat z} \hat W_{j+\frac12}^n\cdot \Delta \hat z \hat W_{j}^n>0.
\end{align}
For the right-hand side of \eqref{eqs3_68}, the Cauchy-Schwarz inequality implies that
\begin{align}  \label{eqs3_73}
\sum_{j=1}^{M-1} \hat R^n \hat z_j f(\hat W_j^{n-1})\cdot \Delta \hat z\hat W_j^n
&\leq \sum_{j=1}^{M-1} \hat R^n L \hat z_j (\hat W_j^{n-1}) \Delta \hat z \hat W_j^n\notag\\
&\leq \frac{\Delta \hat zL \hat R^n}{2} \sum_{j=1}^{M-1} \hat z_j  \left( (\hat W_j^{n-1})^2 + (\hat W_j^n)^2 \right)\notag\\
&= \frac{L \hat R^n}{2} \|\hat W^{n-1} \|_z^2 +\frac{L \hat R^n}{2} \|\hat W^{n} \|_z^2.
\end{align}
By combining \eqref{eqs3_69}--\eqref{eqs3_73}, we can  deduce that
$
\|\hat W^n\|_z^2\le \frac{1+L\Delta t}{1-L\Delta t}\|\hat W^{n-1}\|_z^2
\le \left(1+\frac{2L\Delta t}{1-L\Delta t}\right)^n\|\hat W^{n-1}\|_z^2.
$
Thus the above derivation implies that there exists a constant $C$ such that
$
\|\hat W^n\|_z\le C\|\hat W^0\|_z.
$
\end{proof}
\section{Numerical examples}

In this section, we present some numerical tests on the problems \eqref{eqs1_1}, \eqref{eqs1_2} and \eqref{eqs1_3}, aiming to demonstrate the effectiveness of the developed numerical schemes \eqref{eqs2_12}, \eqref{eqs2_13} and \eqref{eqs2_14} on and the correctness of the theory.

\textbf{Example 4.1} Consider the one-dimensional  diffusive logistic problem with free boundary by taking
$f(u)=u(a - b u)$ in the problem \eqref{eqs1_1}.

For Example 4.1, Ref. \cite{du2010spreading} has shown that spreading always occurs when $h_0 \ge L$, where $L = \frac{\pi}{2}\sqrt{\frac{d}{a}}$.
When $h_0 < L$, there exists a threshold $\mu^*$ depending on $u_0(x)$ such that the spreading also occurs if $\mu > \mu^*$, whereas the species eventually vanishes if $0 < \mu \le \mu^*$.
In the spreading case, for any given $\mu>0$, there exists a spreading speed $k_0(\mu)>0$ such that, for sufficiently large $t$, \(\frac{dh}{dt}\to k_0(\mu)\) and the free boundary grows linearly in time. Meanwhile, \(u(t,x)\) converges locally to the carrying capacity. In the vanishing case, the free boundary $h(t)$ is monotonically increasing but remains bounded above by $L$, and the population density converges uniformly to zero in the habitat domain.

Motivated by the mentioned theory in \cite{du2010spreading}, we use numerical scheme \eqref{eqs2_12} to solve Example 4.1 on different cases to observe these results. For Example 1.1, we choose  $(d,\mu,a,b,h_0)=(1.0,1.0,2,1,4.0)$ and take $u_0(x)=\cos(\pi x/8)$.
Since $h_0=4>L\approx 1.11$, it means that the spreading occurs. Fig.~\ref{fig:1} shows that   the population density converges locally to the carrying capacity $a/b$ over time, and that the free boundary grows linearly in time.
By contrast, the parameters are set to $(d,\mu,a,b,h_0)=(3.0,1.0,2,1,1.0)$ and $u_0(x)=\cos(\pi x/2)$. Since $h_0<L\approx 1.92$, it implies that vanishing occurs. Fig.~\ref{fig:2} indicates that the population density eventually decays to zero, and that the free boundary remains strictly bounded above by $L$ throughout the evolution.
For $(d,\mu,a,b,h_0)=(1.0,\mu,1.0,1,1.0)$ and $u_0(x)=\cos(\pi x/2)$, Fig.~\ref{fig:3} presents the evolution of free boundary  for different $\mu$.
Clearly, for the given parameters, there exists a unique threshold $\mu^*$
that separates spreading behavior from vanishing behavior, and $\mu^*$
can be estimated to lie  between 0.30 and 0.40. For $\mu>\mu^*$, the boundary grows linearly in time. The right panel plots $dh(t)/dt$, which tends to a positive constant in the spreading case and to zero in the vanishing case.
The numerical results in Figs.~\ref{fig:1}--\ref{fig:3} are consistent with the theoretical predictions in \cite{du2010spreading}. Besides, Figs.~\ref{fig:1} and~\ref{fig:2} confirm the positivity of the numerical population density, and Figs.~\ref{fig:1}--\ref{fig:3} demonstrate the monotonicity of the free boundary $h(t)$, both of which agree with our theoretical analysis.

\begin{figure}[htbp]
\centering
\includegraphics[width=0.45\columnwidth]{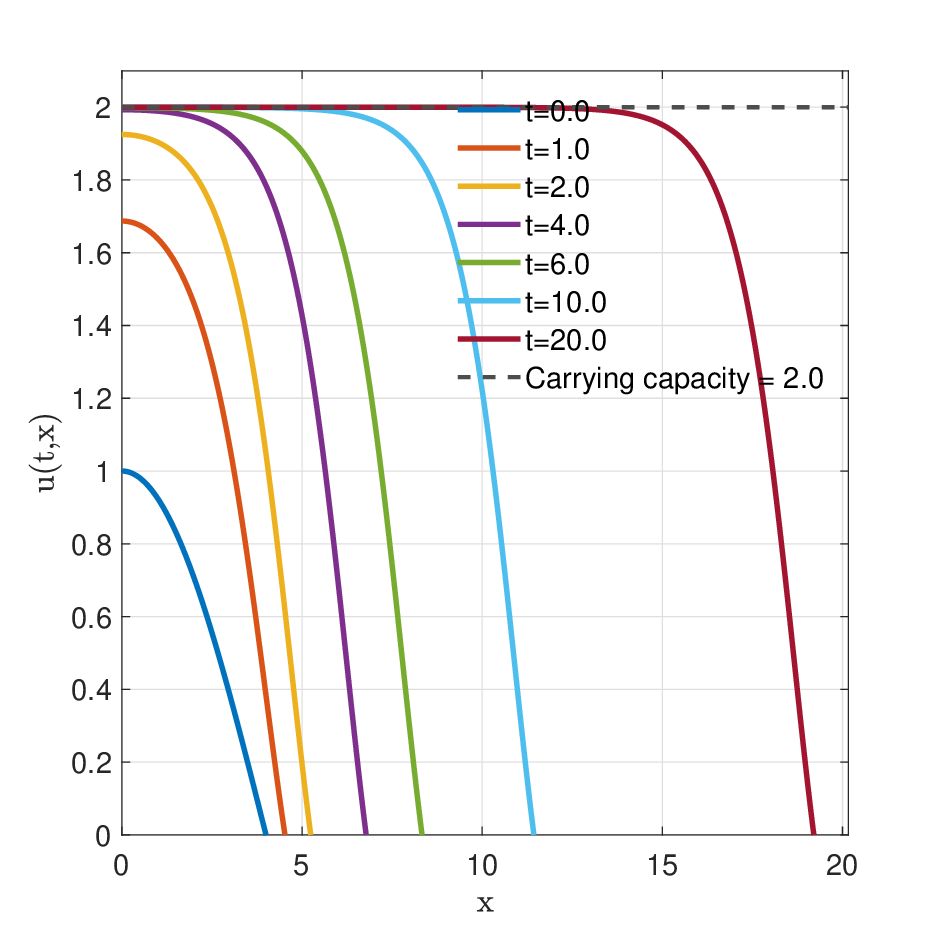}
\includegraphics[width=0.45\columnwidth]{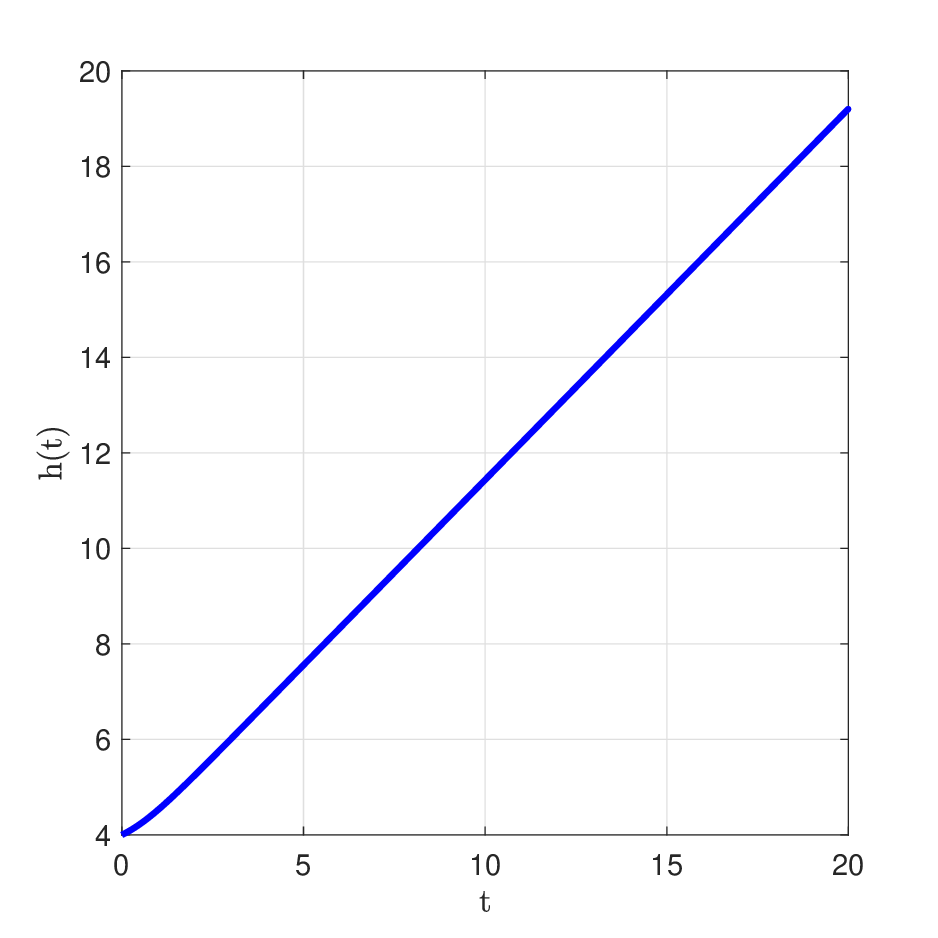}
\caption{Numerical solution of the population density $u(t,x)$ at different times (left), and numerical solution of the spreading front $h(t)$ under the same parameter values (right), where $\Delta t=\Delta z=0.001$.}
\label{fig:1}
\end{figure}

\begin{figure}[htbp]
\centering
\includegraphics[width=0.45\columnwidth]{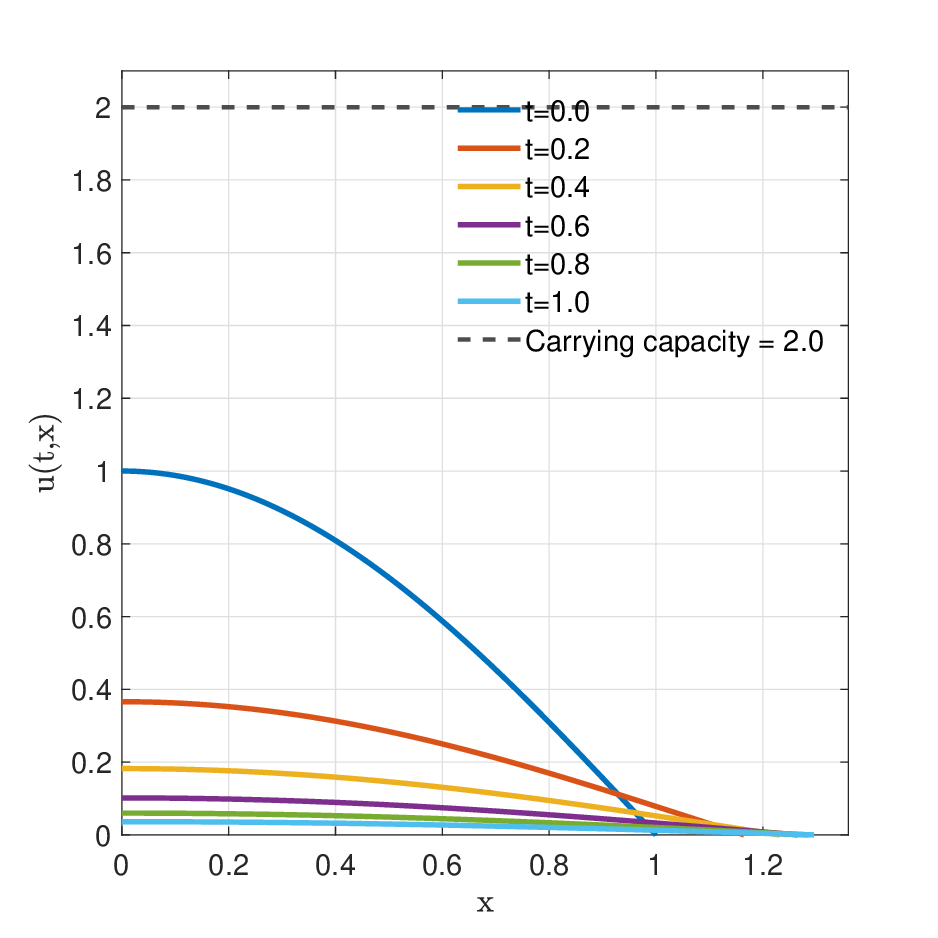}
\includegraphics[width=0.45\columnwidth]{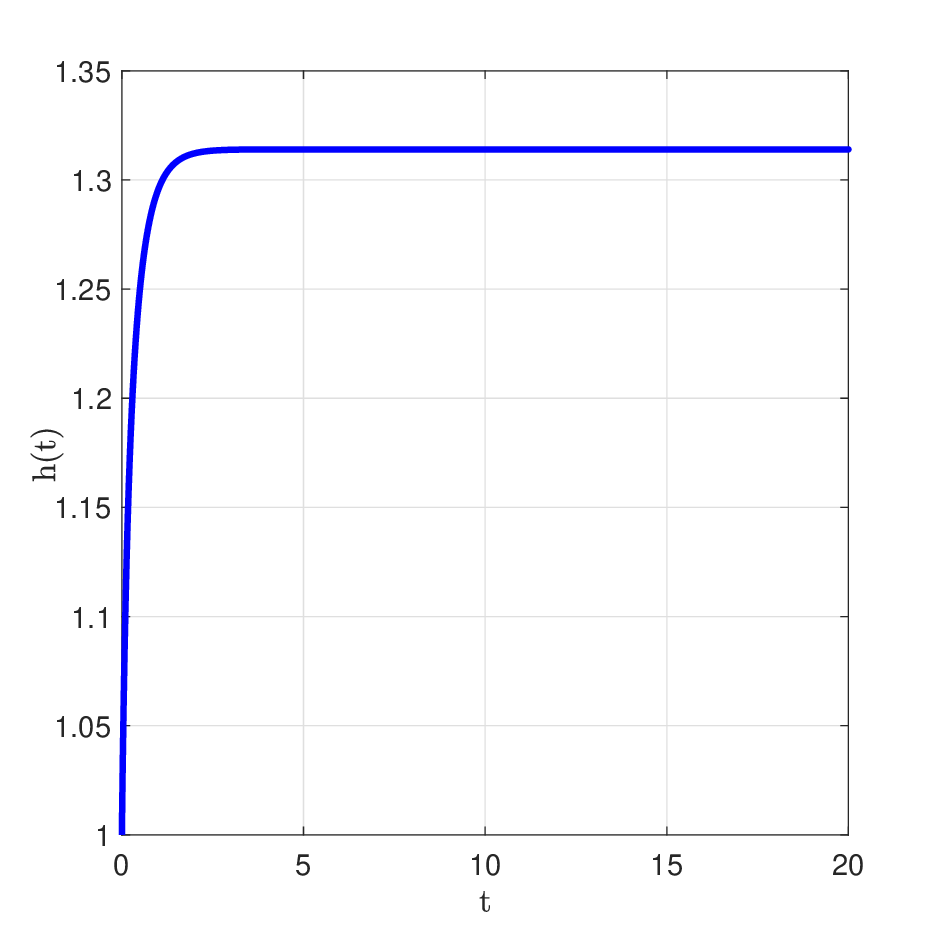}
\caption{Numerical solution of the population density $u(t,x)$ at different times (left), and Numerical solution of the spreading front $h(t)$ under the same parameter values (right), where $\Delta t=\Delta z=0.001$.}
\label{fig:2}
\end{figure}
\begin{figure}[htbp]
\centering
\includegraphics[width=0.45\columnwidth]{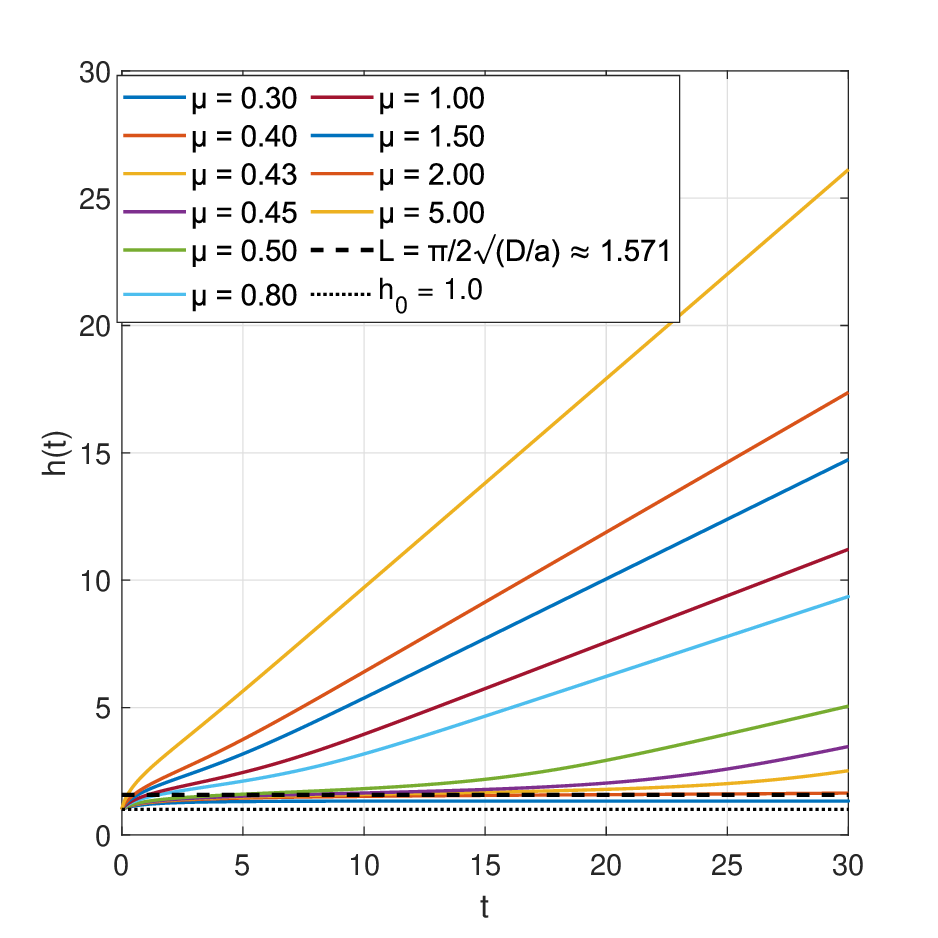}
\includegraphics[width=0.45\columnwidth]{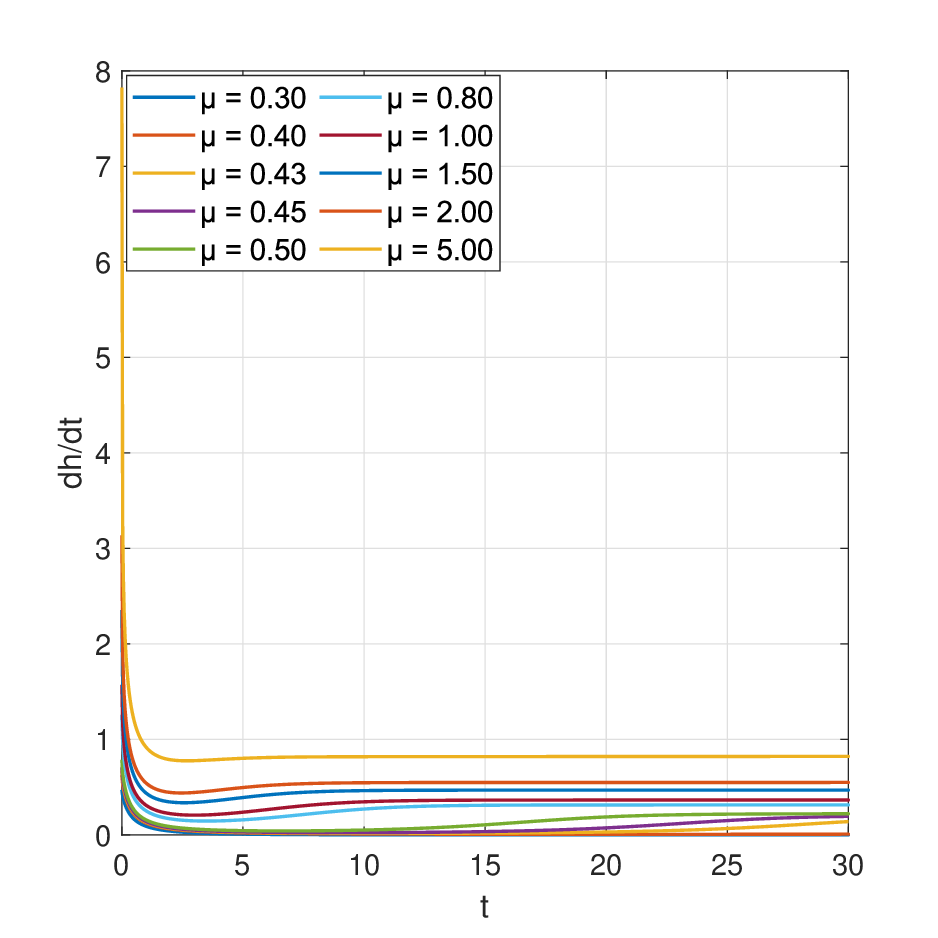}
\caption{Numerical solution of $h(t)$ for different values $\mu$ (left), and Numerical solution of $dh(t)/dt$ under the same parameter values (right), where $\Delta t=\Delta z=0.001$.}
\label{fig:3}
\end{figure}

\textbf{Example 4.2}
Consider the one-dimensional reaction-diffusion equation \eqref{eqs1_2} based on $\mu_1=\mu_2=\mu$ and nonlinear term $f(u)=u(1-u)(u-\theta)$,
$\theta\in (0,\frac{1}{2})$.

For Example 4.2 with $f(u)=u(1-u)(u-\theta)$, the theory in \cite{du2015spreading} indicates that, even when $h_0 \ge Z_B$, both the spreading and vanishing are still possible, where $F(u)=\int_0^u f(s)\,ds$, $\bar{\theta}=\{u\in(\theta,1):F(u)=0\}$, $Z_B=\inf_{\bar{\theta}<q<1}Z(q)$, and $Z(q)=\int_0^q dr/\sqrt{2(F(q)-F(r))}$. More precisely, if $h_0 \ge Z_B$, there exists a finite threshold $\sigma^* < \infty$ depending on $h_0$ and $u_0(x)$ such that the spreading occurs for $\sigma > \sigma^*$, whereas the vanishing occurs for $0 < \sigma < \sigma^*$.
In the spreading case, the boundary moves outward at a linear rate with a constant speed depending on
$\mu$, and the population density converges locally to the carrying capacity. In contrast, in the vanishing case, $g(t)$ and $h(t)$ tend to finite limits, whereas the population density converges uniformly to zero in the surviving region.

For Example 4.2 with $f(u)=u(1-u)(u-\theta)$, let $(d,\mu,h_0,\theta)=(1.0,1.0,5.0,0.2)$ and $u_0(x)=0.7\cos(\pi x/10)$. Since $h_0=5.0>Z_B\approx 4.13$, Fig.~\ref{fig:8} shows a spreading case in which the population density locally approaches the carrying capacity $1$ as time increases and the expanding fronts grow linearly.
With the same parameters but $u_0(x)=0.2\cos(\pi x/10)$, Fig.~\ref{fig:9} displays the vanishing case, in which the density tends uniformly to zero in the surviving region and both expanding fronts converge to finite limits.
For $(d,\mu,h_0,\theta)=(1.0,0.3,5.0,0.2)$ and $u_0(x)=\sigma\cos(\pi x/10)$, Fig.~\ref{fig:10} illustrates the evolution of the expanding fronts for different $\sigma$ when $h_0>Z_B\approx 4.13$.
For fixed $\mu$, there exists a unique threshold $\sigma^*$ between $0.40$ and $0.45$ separating the spreading and the vanishing. In the spreading case, the numerical derivatives $dh(t)/dt$ and $dg(t)/dt$ tend to a nonzero constant in time, whereas in the vanishing case they tend to zero. By observing the above these numerical results in in Figs.~\ref{fig:8}--\ref{fig:10}, it is clear that they are consistent with the theoretical findings in Ref. \cite{du2015spreading}.
These results in Figs.~\ref{fig:8}--\ref{fig:10} are consistent with the theoretical findings in Ref. \cite{du2015spreading}. Moreover, these obtained figures show that the numerical density remains positive, the numerical solution of $h(t)$ is monotonically increasing, and the numerical solution of $g(t)$ is monotonically decreasing, which matches our theory.
\begin{figure}[htbp]
\centering
\includegraphics[width=0.45\columnwidth]{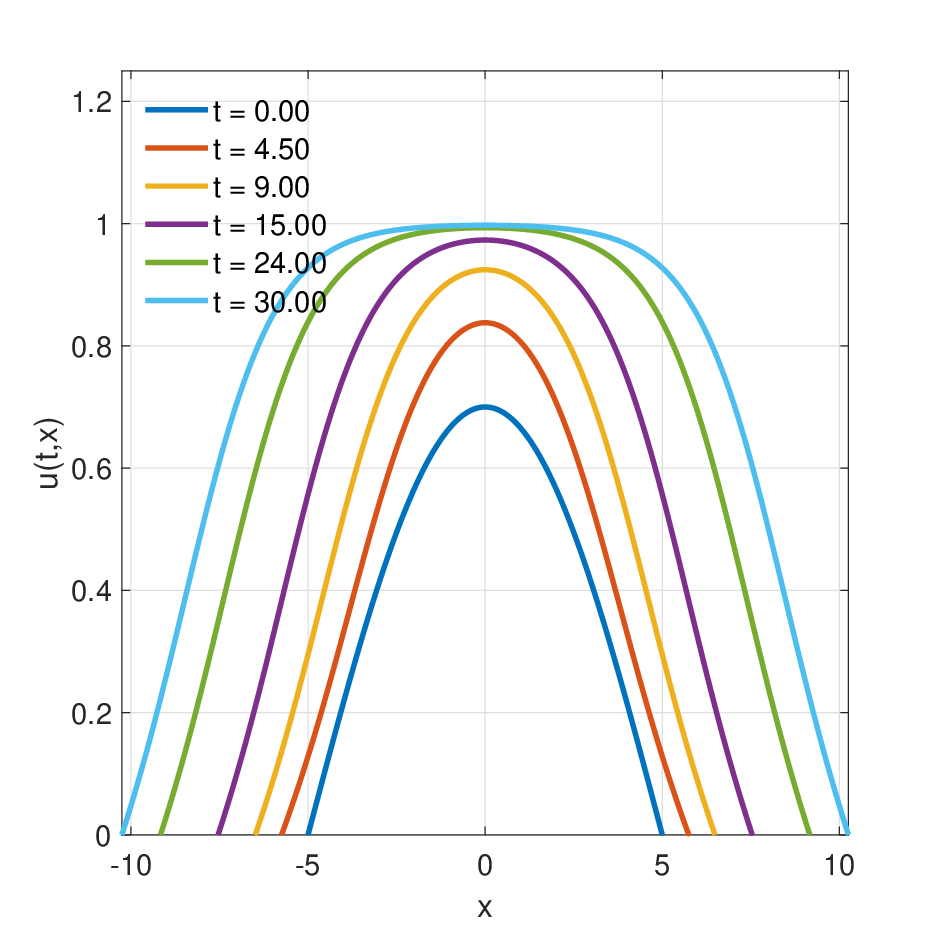}
\includegraphics[width=0.45\columnwidth]{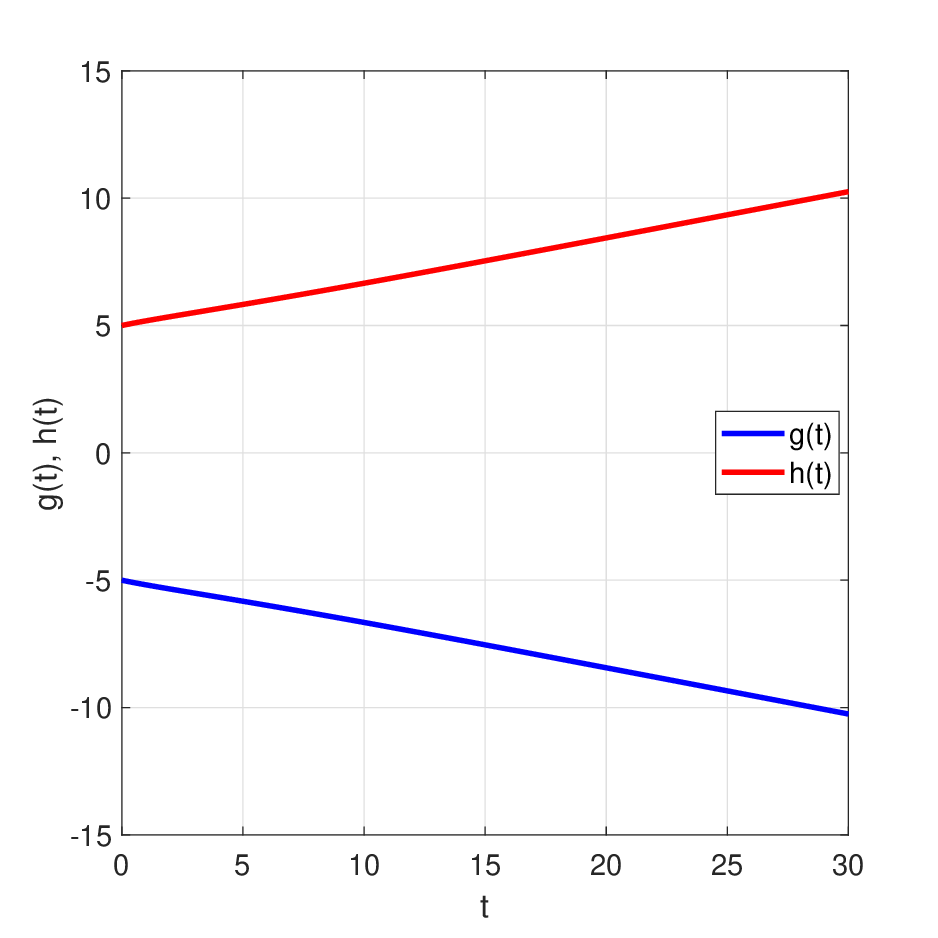}
\caption{Numerical solution of the population density $u(t,x)$ at different times (left), and numerical solution of the spreading front $h(t)$ under the same parameter values (right), where $\Delta t=\Delta \tilde z=0.001$.}
\label{fig:8}
\end{figure}
\begin{figure}[htbp]
\centering
\includegraphics[width=0.45\columnwidth]{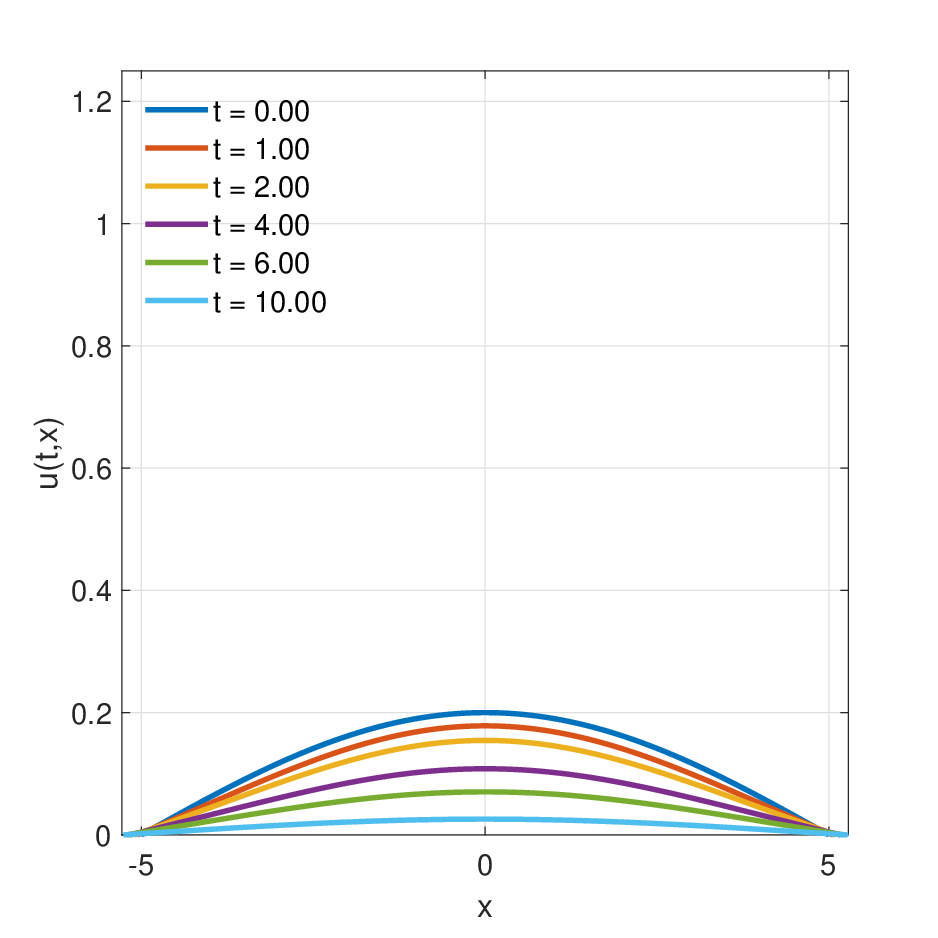}
\includegraphics[width=0.45\columnwidth]{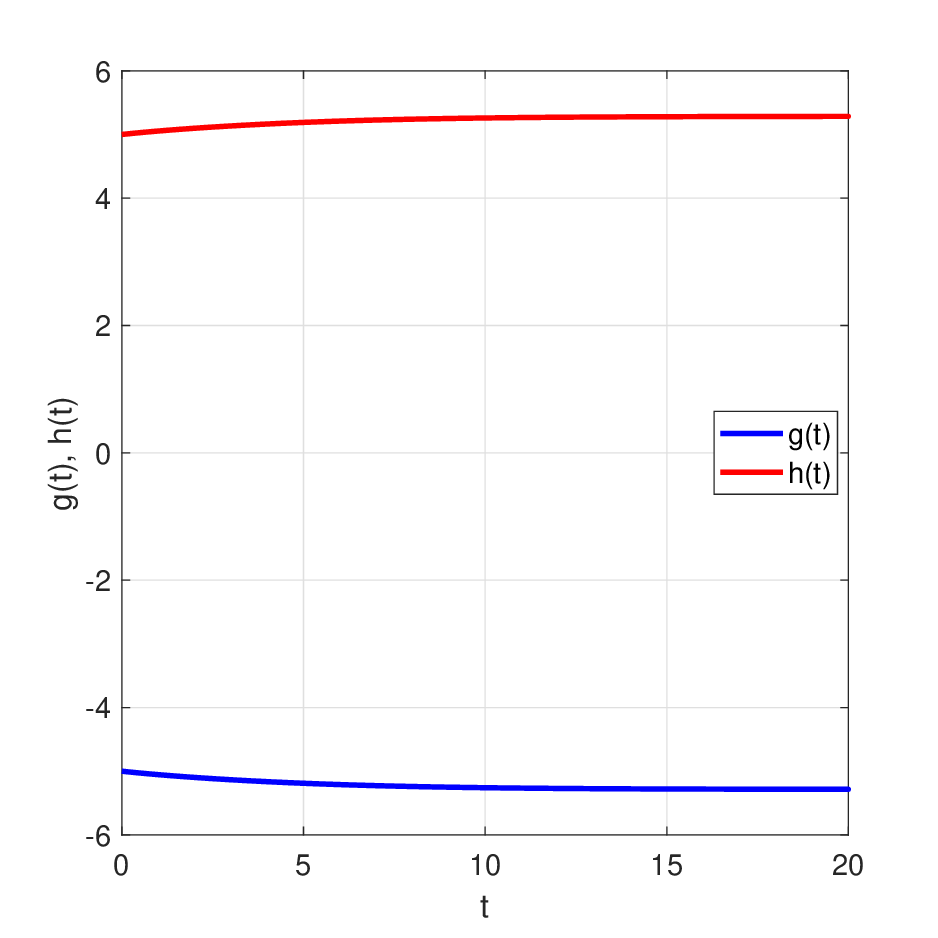}
\caption{Numerical solution of the population density $u(t,x)$ at different times (left), and numerical solution of the spreading front $h(t)$ under the same parameter values (right), where $\Delta t=\Delta \tilde z=0.001$.}
\label{fig:9}
\end{figure}
\begin{figure}[htbp]
\centering
\includegraphics[width=0.45\columnwidth]{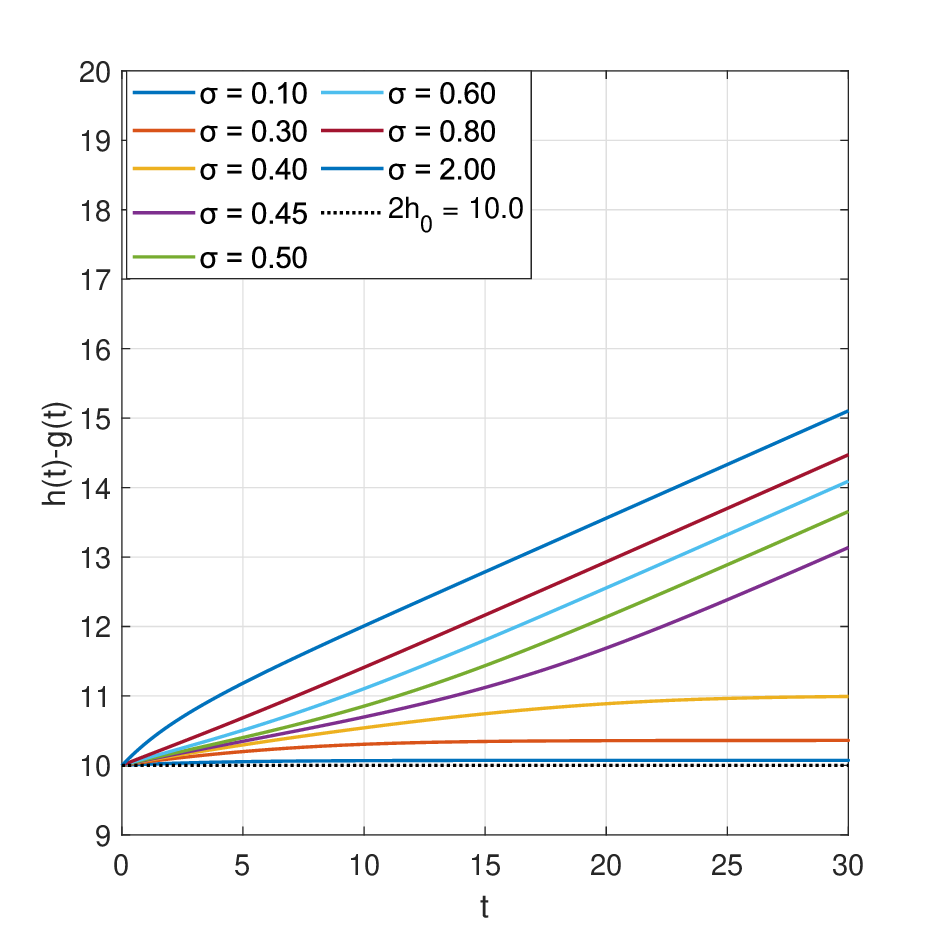}
\includegraphics[width=0.45\columnwidth]{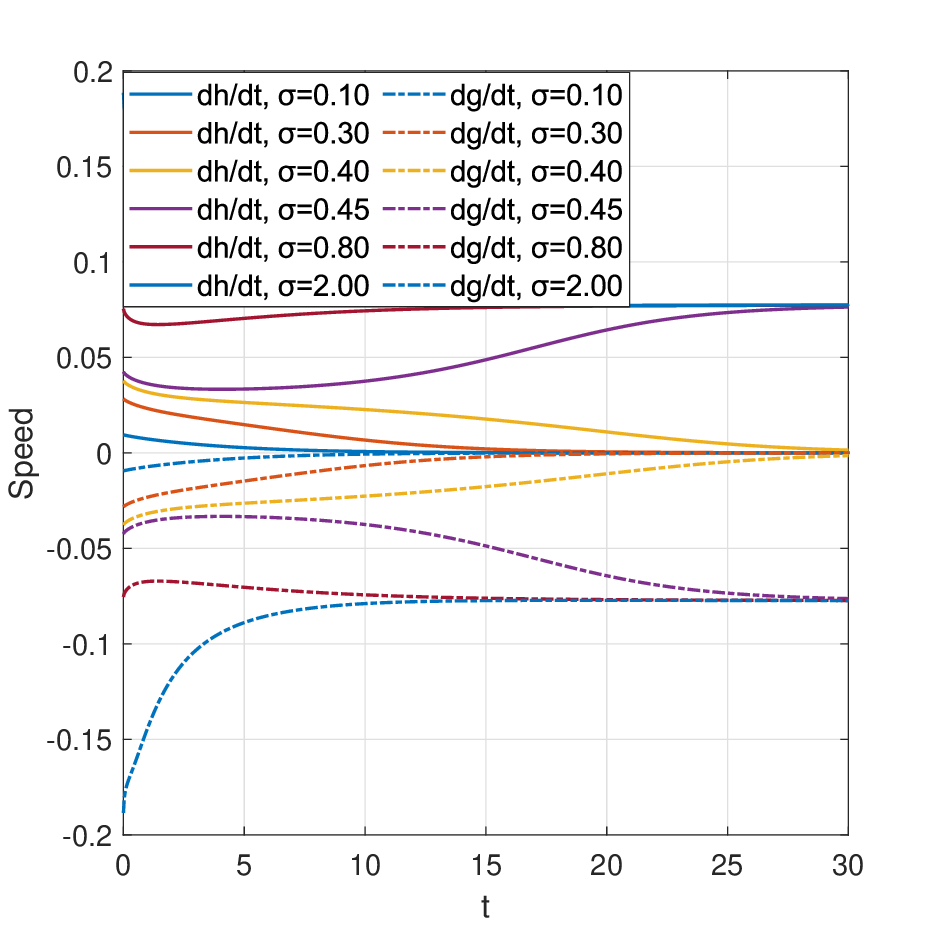}
\caption{Numerical solution of $h(x)-g(x)$ for different values $\sigma$ (left), and numerical solutions of $dh/dt$ and $dg/dt$ under the same parameter values(right), where $\Delta t=\Delta \tilde z=0.001$.}
\label{fig:10}
\end{figure}

\textbf{Example 4.3} Consider the reaction-diffusion equation \eqref{eqs1_3} with nonlinear term $f(u)=u(a-bu)$.

For Example 4.3 with $f(u)=u(a-bu)$, the results in \cite{du2011spreading} show that there exists $R^*\approx 2.4048\sqrt{\frac{d}{a}}$ such that the species spreads when $h_0\ge R^*$.Conversely, when $0<h_0<R^*$, there exists $\mu^*$ depending on $u_0(r)$ such that the species spreads if $\mu>\mu^*$, and the species eventually disappears if $0<\mu\le \mu^*$.Let $d=1, a=b$, the conclusions in \cite{Du&Matsuzawa2015Spreading} indicate that there is a threshold $\sigma^*$ depending on $u_0(r)$ and $h_0$ such that the species spreads if $\sigma>\sigma^*$, and the species eventually disappearsif $0<\sigma\le \sigma^*$. In the spreading case, there is $c^*$ depending on $\mu$ such that, for sufficiently large $t$, the spreading speed approaches $c^*$ and the population density $u(t,r)$ locally tends to the carrying capacity of the habitat. In the vanishing case, the spreading front $h(t)$ eventually converges to a finite value with $h(t)\le R^*$, and the population density $u(t,r)$ converges uniformly to $0$.

 \begin{figure}[htbp]
\centering
\includegraphics[width=0.45\columnwidth]{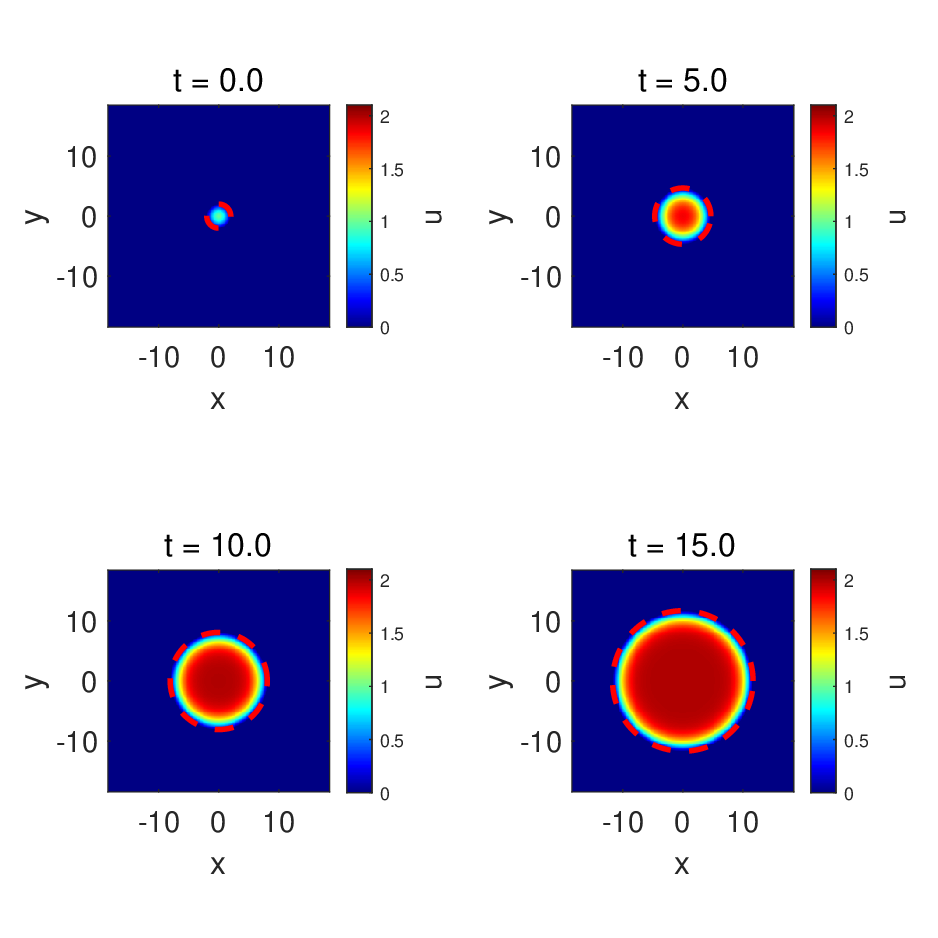}
\includegraphics[width=0.45\columnwidth]{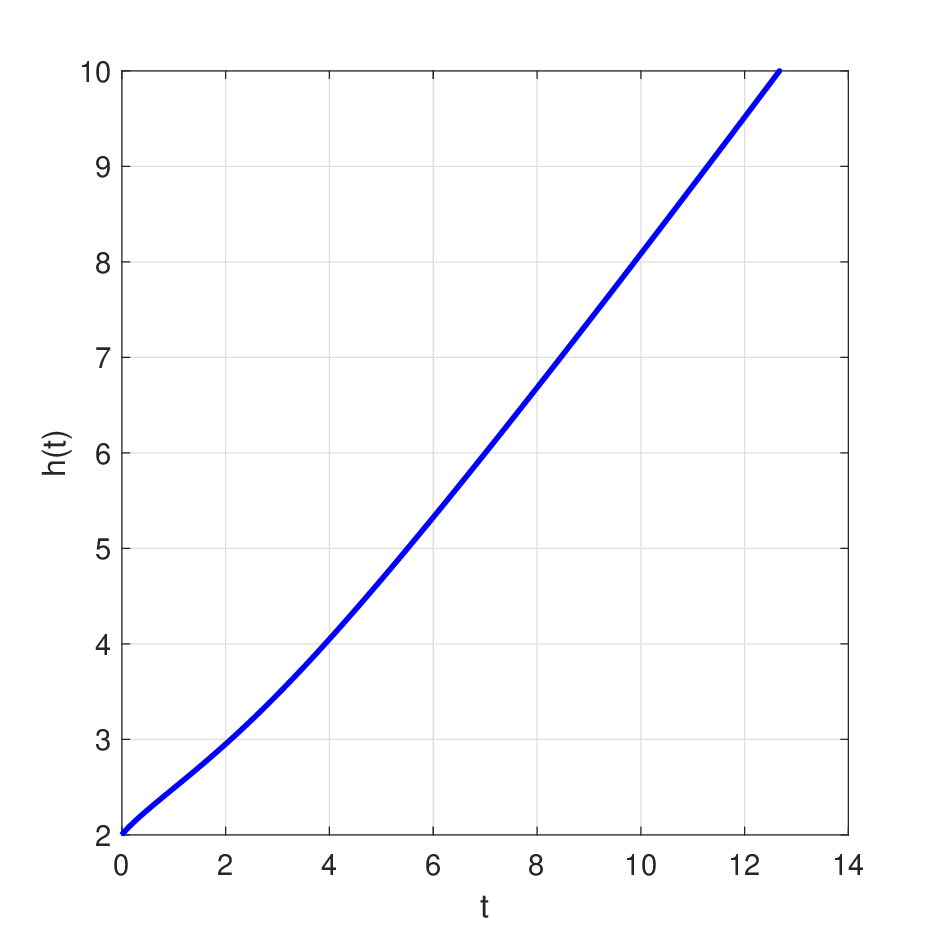}
\caption{Numerical solution of the population density $u(t,r)$ at different times (left), and numerical solution of the spreading front $h(t)$ under the same parameter values (right), where $\Delta t=\Delta\hat  z=0.001$.}
\label{fig:11}
\end{figure}
\begin{figure}[htbp]
\centering
\includegraphics[width=0.45\columnwidth]{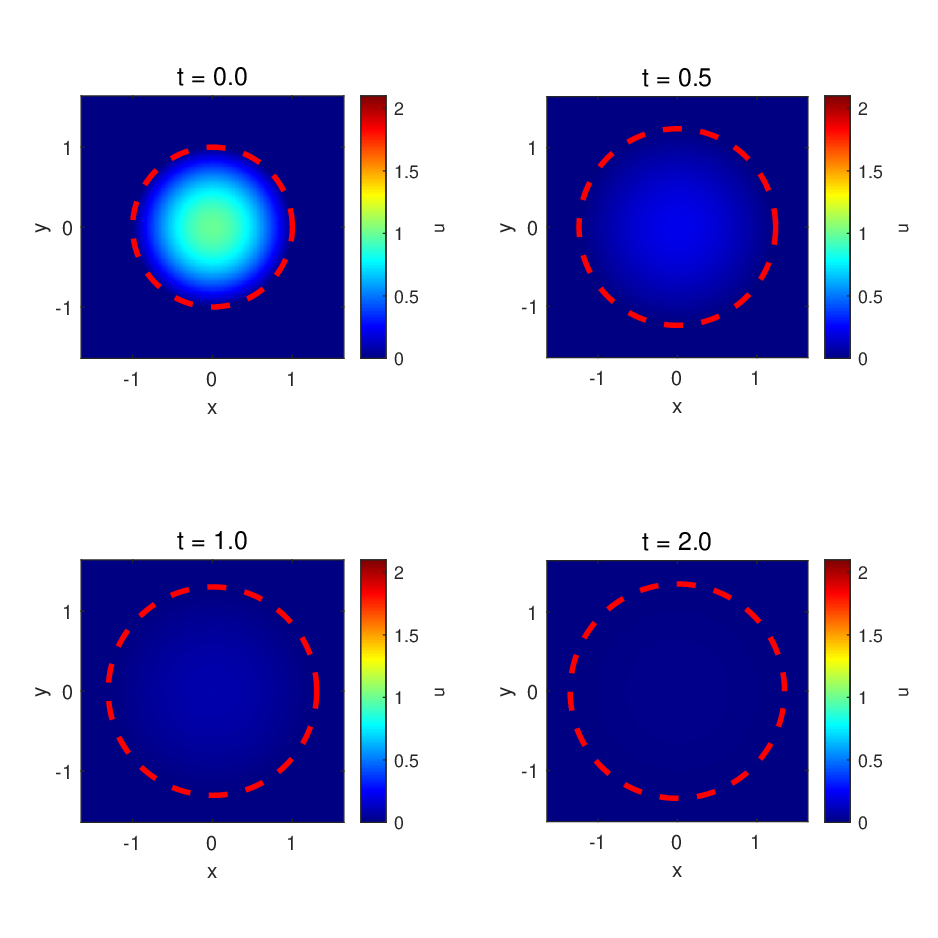}
\includegraphics[width=0.45\columnwidth]{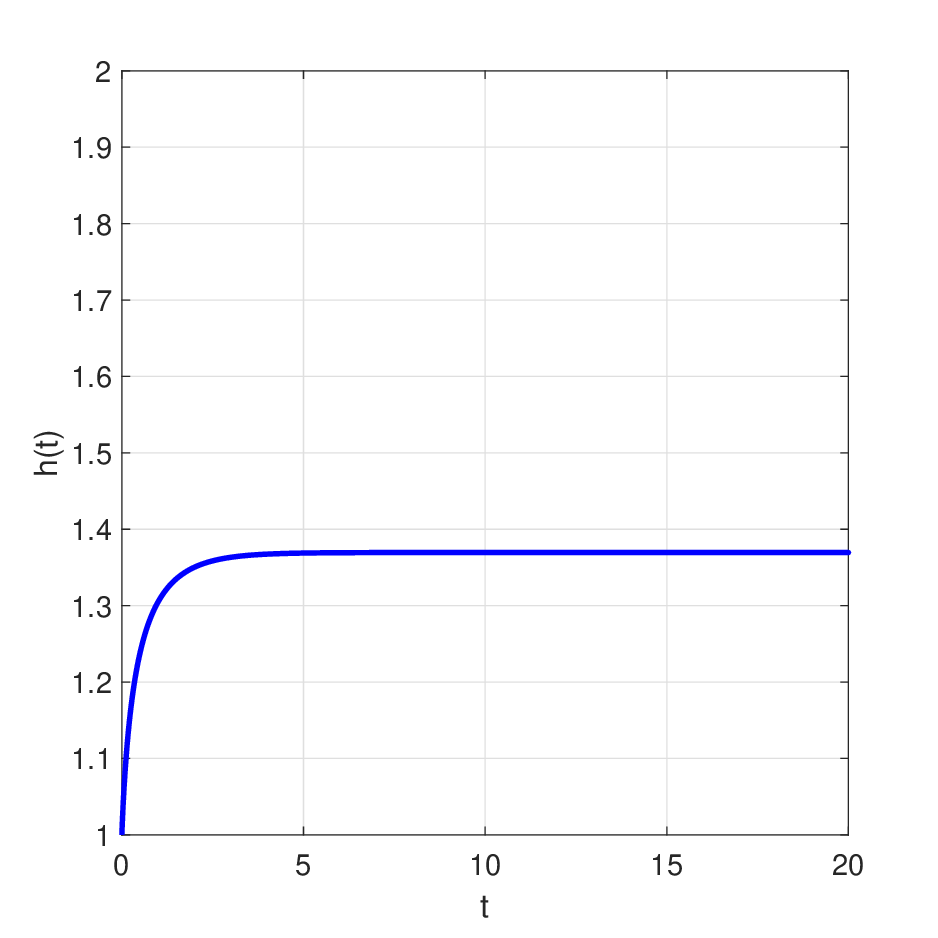}
\caption{Numerical solution of the population density $u(t,r)$ at different times (left), and numerical solution of the spreading front $h(t)$ under the same parameter values (right), where $\Delta t=\Delta \hat z=0.001$.}
\label{fig:12}
\end{figure}
\begin{figure}[htbp]
\centering
\includegraphics[width=0.45\columnwidth]{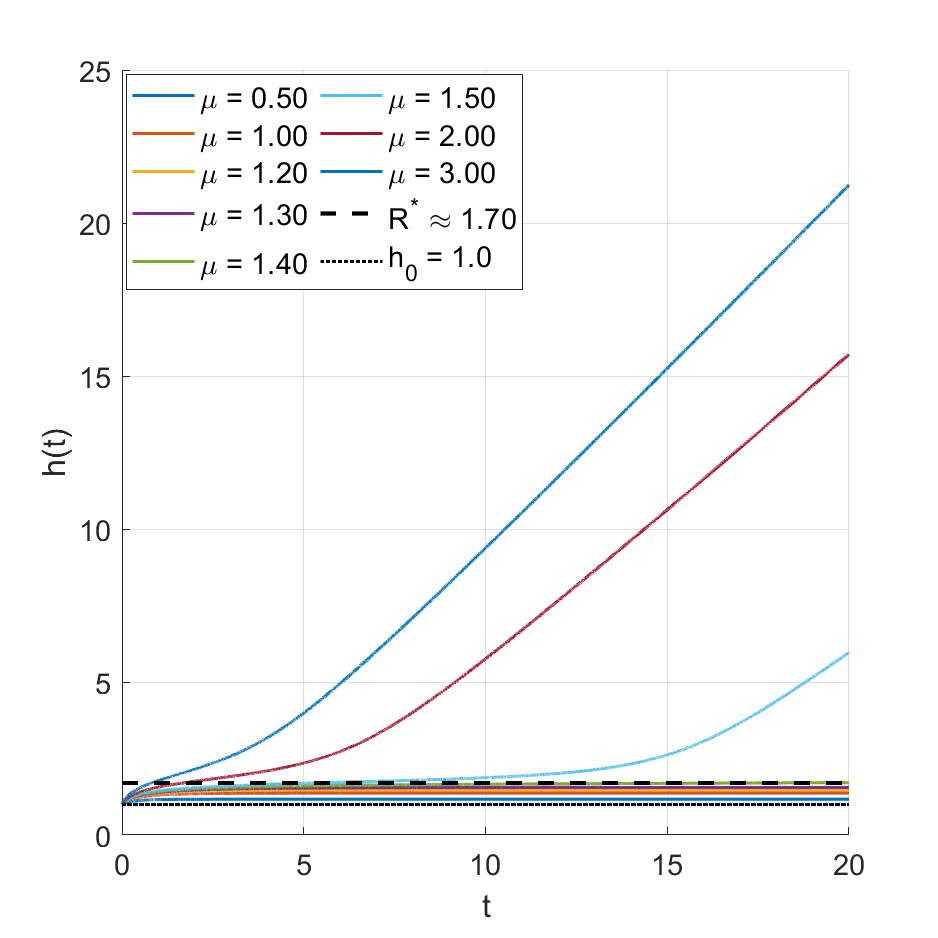}
\includegraphics[width=0.45\columnwidth]{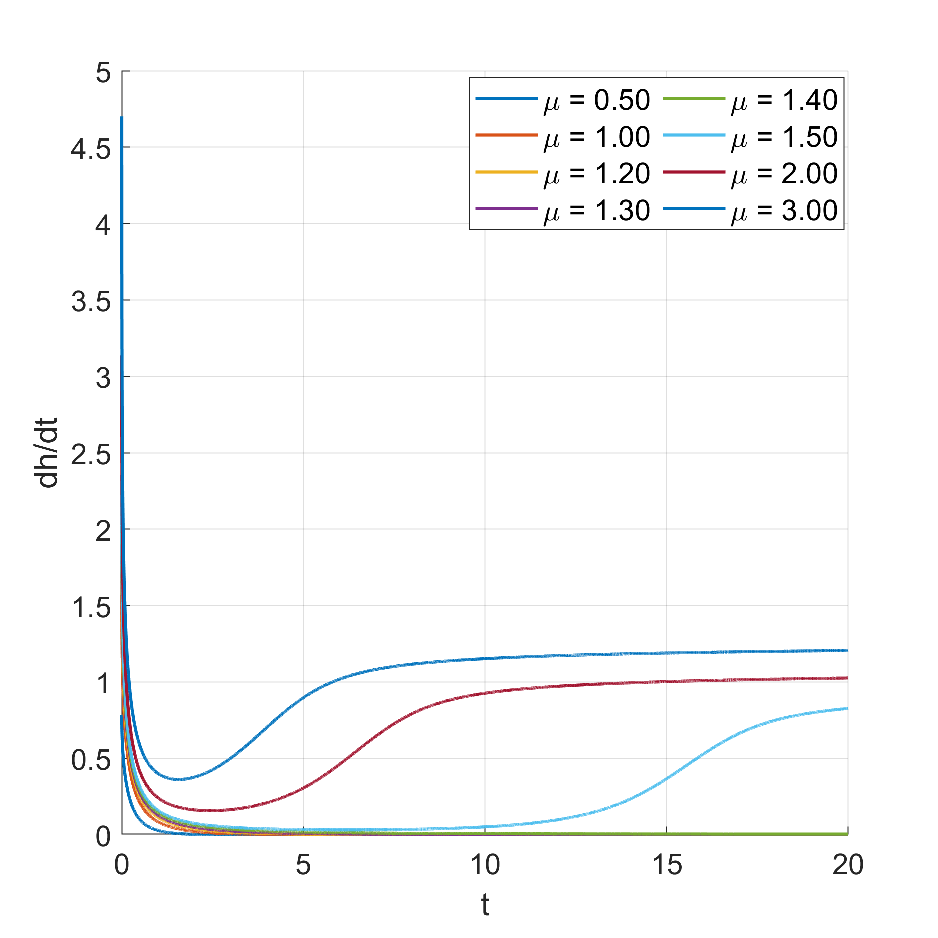}
\caption{Numerical solution of $h(t)$ for different  $\mu$ (left), and numerical solution of the derivative $dh(t)/dt$ under the same parameter values (right), where $\Delta t=\Delta \hat z=0.001$.}
\label{fig:13}
\end{figure}
\begin{figure}[htbp]
\centering
\includegraphics[width=0.45\columnwidth]{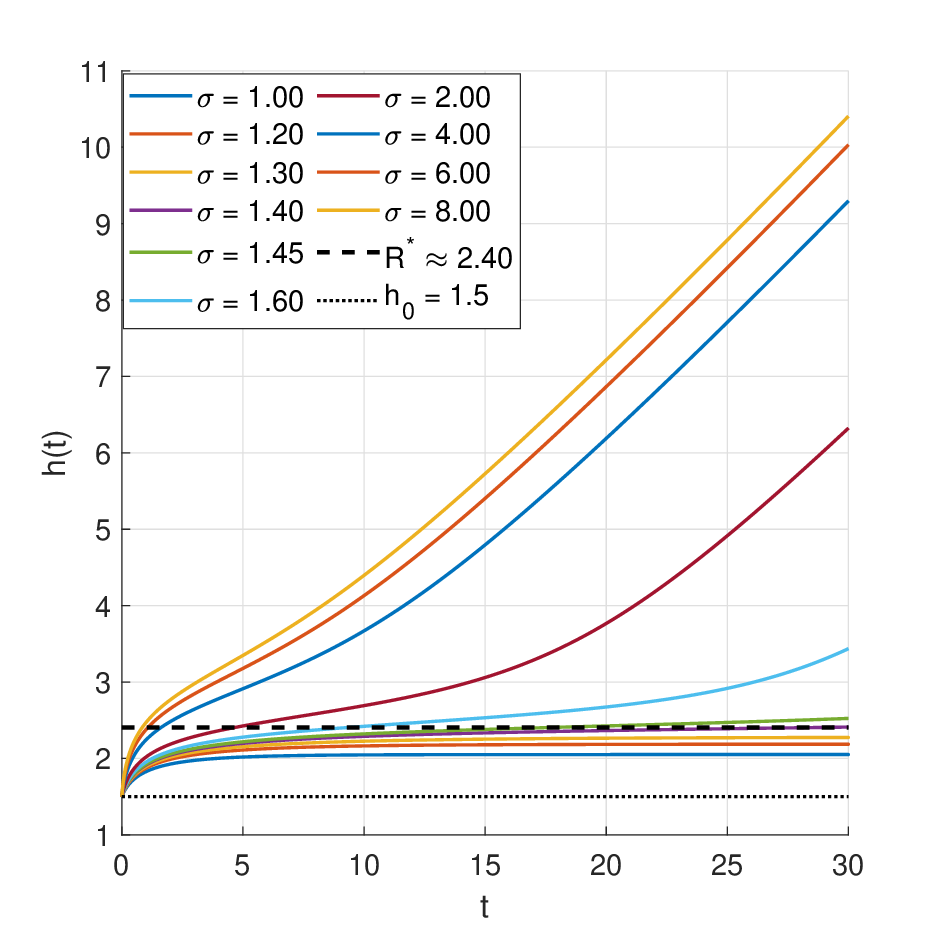}
\includegraphics[width=0.45\columnwidth]{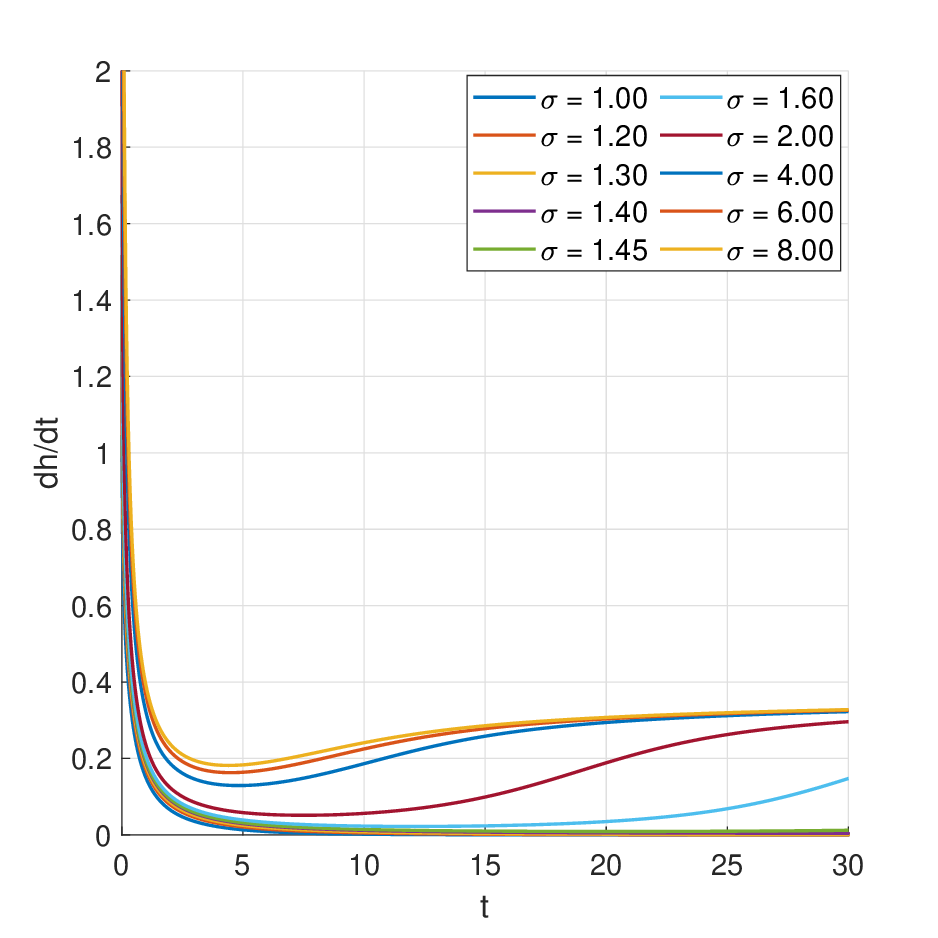}
\caption{Numerical solution of $h(t)$ for different  $\sigma$ (left), and numerical solution of the derivative $dh(t)/dt$ under the same parameter values (right), where $\Delta t=\Delta\hat z=0.001$.}
\label{fig:14}
\end{figure}

Now, we calculate Example 4.3 by numerical scheme \eqref{eqs2_14} and observe the obtained results. For Example 4.3 with $f(u)=u(a-bu)$, by taking parameters $(d,\mu,a,b,h_0)=(1.0,1.0,2,1,2.0)$ and $u_0(r)=\cos(\pi r/4)$, Fig.~\ref{fig:11} shows the spreading behavior of the species under the condition $h_0=2>R^*\approx 1.70$.
It indicates that the population density gradually approaches the habitat carrying capacity $a/b$ as time increases, and the spreading front gradually exhibits a linear expansion trend over time.
In contrast, let $(d,\mu,a,b,h_0)=(1.0,1.0,2,1,1.0)$ and $u_0(r)=\cos(\pi r/2)$. Fig.~\ref{fig:12} depicts the vanishing behavior as $h_0<R^*\approx 1.70$, where the population density gradually tends to $0$ as time increases and the spreading front remains bounded above by $R^*$.
Let $(d,\mu,a,b,h_0)=(1.0,\mu,1.0,1,1.0)$ and $u_0(r)=\cos(\pi r/2)$, Fig.~\ref{fig:13} represents that there exists a unique threshold $\mu^*\in(1.40,1.50)$ separating the spreading and the vanishing.
When $\mu>\mu^*$, the spreading front expands linearly in time. The figure also shows that the spreading speed approaches a positive constant in the spreading case, but tends to 0 in the vanishing case. Let $(d,\mu,a,b,h_0)=(1.0,1.0,1.0,1,1.5)$ and $u_0(r)=\sigma\cos(\pi r/3)$.
Fig.~\ref{fig:14} shows that there is a threshold $\sigma^*\in(1.45,1.60)$ separating the spreading and the vanishing. It also indicates that the spreading speed approaches a positive constant in the spreading case, but tends to 0 in the vanishing case.
The numerical results indicate that the spreading speed approaches a positive constant in the spreading case, whereas it tends to 0 in the vanishing case.
By observing the above Figs.~\ref{fig:11}--\ref{fig:14}, they are
obviously consistent with the theoretical results in Refs. \cite{du2011spreading,Du&Matsuzawa2015Spreading}.
What is more, these figures also clearly demonstrate that the numerical solution of the population density is remains positive, and the spreading front $h(t)$ is monotonically increasing, which is agreement with our theoretical results.

\section{Conclusion}
In order to solve the free boundary problems \eqref{eqs1_1}, \eqref{eqs1_2} and \eqref{eqs1_3}, we propose three efficient finite difference methods in this paper. Especially,
the popular front-fixing method is used to transform the considered equations into some problems
with fixed boundary firstly. Then the finite difference method is employed to solve these transformed problems.
It should be pointed out that, to the best of our knowledge, the existing finite difference methods based on the front-fixing approach generally impose rather restrictive conditions on the time step-size such as $\Delta t \le C(\Delta z)^2$. However, by appropriately constructing the finite difference schemes to make the coefficient matrix be $\mathrm{M}$-matrices, we only need a moderate restriction $\Delta t<{1}/{L}$ to ensure the positivity of the numerical solution and the monotonicity of the numerically spreading front.
In addition, by introducing new discrete weighted norms, the conditional stability of the proposed numerical schemes are established.
Finally, three numerical examples  are given to confirm the theoretical results. Besides, the numerical evidence for the existence of the thresholds
$\mu^*$ and $\sigma^*$ reported in Refs. \cite{du2010spreading,du2015spreading,du2011spreading,Du&Matsuzawa2015Spreading}
obtained from the numerical tests also further validates the effectiveness of the proposed numerical schemes.



\begin{thebibliography}{00}
\addcontentsline{toc}{section}{References}

\bibitem{stefan1891theorie}J. Stefan, \"Uber die Theorie der Eisbildung, insbesondere \"uber die Eisbildung im Polarmeere, Ann. Phys. 278 (1891) 269--286.

\bibitem{crank1984free}J. Crank, Free and Moving Boundary Problems, Clarendon Press, Oxford, 1984.

\bibitem{chen2015free}G.Q. Chen, H. Shahgholian, J.L. Vazquez, Free boundary problems: the forefront of current and future developments, Philos. Trans. A Math. Phys. Eng. Sci. 373 (2015) 20140285.

\bibitem{carinci2016free}G. Carinci, A. De Masi, C. Giardin{\`a}, E. Presutti, Free boundary problems in PDEs and particle systems, Springer, Berlin, 2016.

\bibitem{bansch2023interfaces}E. Bänsch, K. Deckelnick, H. Garcke, P. Pozzi, Interfaces: modeling, analysis, numerics, Springer, Berlin, 2023.

\bibitem{du2010spreading}Y. Du, Z. Lin, Spreading-vanishing dichotomy in the diffusive logistic model with a free boundary, SIAM J. Math. Anal. 42 (2010) 377--405.

\bibitem{kaneko2011free}Y. Kaneko, Y. Yamada, A free boundary problem for a reaction-diffusion equation appearing in ecology, Adv. Math. Sci. Appl. 21 (2011) 467--492.

\bibitem{du2015spreading}Y. Du, B. Lou, Spreading and vanishing in nonlinear diffusion problems with free boundaries, J. Eur. Math. Soc. 17 (2015) 2673--2724.

\bibitem{du2011spreading}Y. Du, Z. Guo, Spreading-vanishing dichotomy in a diffusive logistic model with a free boundary, II, J. Differ. Equ. 250 (2011) 4336--4366.

\bibitem{Du&Matsuzawa2015Spreading}Y. Du, H. Matsuzawa, M. Zhou, Spreading speed and profile for nonlinear Stefan problems in high space dimensions, J. Math. Pures Appl. 103 (2015) 741--787.

\bibitem{caginalp1986analysis}G. Caginalp, An analysis of a phase field model of a free boundary, Arch. Ration. Mech. Anal. 92 (1986) 205--245.

\bibitem{marshall1986front}G. Marshall, A front tracking method for one-dimensional moving boundary problems, SIAM J. Sci. Stat. Comput. 7 (1986) 252--263.

\bibitem{womble1989front}D.E. Womble, A front-tracking method for multiphase free boundary problems, SIAM J. Numer. Anal. 26 (1989) 380--396.

\bibitem{hou1995numerical}T.Y. Hou, Numerical solutions to free boundary problems, Acta Numer. 4 (1995) 335--415.

\bibitem{duffy2013finite}D.J. Duffy, Finite difference methods in financial engineering: a partial differential equation approach, John Wiley \& Sons, Chichester, 2006.

\bibitem{piqueras2017front}M.-A. Piqueras, R. Company, L. Jódar, A front-fixing numerical method for a free boundary nonlinear diffusion logistic population model, J. Comput. Appl. Math. 309 (2017) 473--481.

\bibitem{kumar2020moving}A. Kumar, A moving boundary problem with space-fractional diffusion logistic population model and density-dependent dispersal rate, Appl. Math. Model. 88 (2020) 951--965.

\bibitem{liu2020krylov}S. Liu, X. Liu, Krylov implicit integration factor method for a class of stiff reaction-diffusion systems with moving boundaries, Discrete Contin. Dyn. Syst. Ser. B 25(1) (2020) 141--159.

\bibitem{liu2020numerical}S. Liu, Y. Du, X. Liu, Numerical studies of a class of reaction-diffusion equations with Stefan conditions, Int. J. Comput. Math. 97 (2020) 959--979.

\bibitem{casaban2023qualitative}M.C. Casabán, R. Company, V.N. Egorova, L. Jódar, Qualitative numerical analysis of a free-boundary diffusive logistic model, Mathematics 11(6) (2023) 1296.

\bibitem{yang2019numerical}L. Yang, L. Bao, Numerical study of vanishing and spreading dynamics of chemotaxis systems with logistic source and a free boundary, Discrete Contin. Dyn. Syst. Ser. B 26(2) (2021) 1083--1109.

\bibitem{casaban2024random}M.-C. Casabán, R. Company, V.N. Egorova, L. Jódar, A random free-boundary diffusive logistic differential model: Numerical analysis, computing and simulation, Math. Comput. Simul. 221 (2024) 55--78.

\bibitem{landau1950heat}H.G. Landau, Heat conduction in a melting solid, Quart. Appl. Math. 8 (1950) 81--94.

\bibitem{berman1994nonnegative}A. Berman, R.J. Plemmons, Nonnegative Matrices in the Mathematical Sciences, SIAM, Philadelphia, 1994.

\bibitem{horn2012matrix}R.A. Horn, C.R. Johnson, Matrix Analysis, Cambridge Univ. Press, Cambridge, 2012.
\end{thebibliography}
\end{document}